\DeclareSymbolFont{bbold}{U}{bbold}{m}{n}
\DeclareSymbolFontAlphabet{\mathbbold}{bbold}
\newtheorem{theo}{Theorem}[section]
\newtheorem{prop}[theo]{Proposition}
\newtheorem{cor}[theo]{Corollary}
\newtheorem{lemma}[theo]{Lemma}
\theoremstyle{definition}
\newtheorem{defi}[theo]{Definition}
\newtheorem{exa}[theo]{Example}
\newtheorem{rem}[theo]{Remark}
\newtheorem{que}[theo]{Question}
\numberwithin{equation}{section}
\newcommand{\Ds}{\displaystyle}
\newcommand{\Ts}{\textstyle}
\newcommand{\N}{{\mathbb N}}
\newcommand{\F}{{\mathbb F}}
\newcommand{\Z}{{\mathbb Z}}
\newcommand{\C}{{\mathbb C}}
\newcommand{\cA}{{\mathcal A}}
\newcommand{\cC}{{\mathcal C}}
\newcommand{\cB}{{\mathcal B}}
\newcommand{\cI}{{\mathcal I}}
\newcommand{\cJ}{{\mathcal J}}
\newcommand{\cS}{{\mathcal S}}
\newcommand{\cN}{{\mathcal N}}
\newcommand{\ideal}[1]{\mbox{$\langle{#1}\rangle$}}
\newcommand{\dd}{\mbox{\rm d}}
\newcommand{\rb}{\mbox{\rm rb}}
\newcommand{\wt}{\mbox{\rm wt}}
\newcommand{\girth}{\mbox{\rm girth}}
\newcommand{\mbone}{{\mathbbold{1}}}
\newcounter{alp}
\newcounter{ara}
\newcounter{rom}
\newenvironment{romanlist}{\begin{list}{\roman{rom})\hfill}{\usecounter{rom}
     \topsep0ex \labelwidth.7cm \leftmargin.7cm \labelsep0cm
     \rightmargin0cm \parsep0ex \itemsep.4ex
     \partopsep1ex}}{\end{list}}
\newenvironment{alphalist}{\begin{list}{(\alph{alp})\hfill}{\usecounter{alp}
     \topsep0ex \labelwidth.7cm \leftmargin.7cm \labelsep0cm
     \rightmargin0cm \parsep0ex \itemsep.5ex
     \partopsep0ex}}{\end{list}}
\newenvironment{arabiclist}{\begin{list}{\arabic{ara})\hfill}{\usecounter{ara}
     \topsep0ex \labelwidth.6cm \leftmargin.6cm \labelsep0cm
     \rightmargin0cm \parsep0ex \itemsep.8ex
     \partopsep0ex}}{\end{list}}
\begin{document}
\title{On Robust Colorings of Hamming-Distance Graphs}
\date{\today}
\author{Isaiah Harney$^\ast$ and Heide Gluesing-Luerssen\footnote{HGL was partially supported by the National
Science Foundation Grant DMS-1210061.
Both authors are with the Department of Mathematics, University of Kentucky, Lexington KY 40506-0027, USA;
\{isaiah.harney,heide.gl\}@uky.edu.}}

\maketitle

{\bf Abstract:}
$H_q(n,d)$ is defined as the graph with vertex set $\Z_q^n$ and where two vertices are adjacent if their
Hamming distance is at least~$d$.
The chromatic number of these graphs is presented for various sets of parameters $(q,n,d)$.
For the $4$-colorings of the graphs $H_2(n,n-1)$ a notion of robustness is introduced.
It is based on the tolerance of swapping colors along an edge without destroying properness of the coloring.
An explicit description of the maximally robust $4$-colorings of $H_2(n,n-1)$ is presented.

{\bf Keywords:} Hamming distance, graphs, coloring, $q$-ary block codes.

{\bf MSC (2010):} 05C15, 05C69, 94B05

\section{Introduction}
In this paper we study vertex colorings of graphs based on the Hamming distance.
Precisely, fix $q\geq2$ and $d\leq n$ and consider the graph with vertex set $\Z_q^n$ and
where two vertices are adjacent if their Hamming distance is at least~$d$.
We denote these graphs by $H_q(n,d)$ and call them Hamming-distance graphs or shortly Hamming graph
(the latter terminology, however, is used for a variety of graphs in the literature and therefore needs to be taken with care).
Note that the cliques in $H_q(n,d)$ are exactly the $q$-ary block codes of length~$n$ and Hamming distance at least~$d$, whereas the
independent sets are the anticodes with maximum distance $d-1$.

For general parameters, $H_q(n,d)$ is not a graph product of any standard sort (weak, strong, cartesian etc.) and therefore the
abundant literature on the chromatic number and colorings of graph products (see, e.g., the survey~\cite{Kl96} by Klav\u{z}ar
and the references therein) does not apply.

However, for $d=n$ the graph $H_q(n,n)$ is the $n$-th weak power of the complete graph~$K_q$ on~$q$ vertices, and its
chromatic number as well as all minimal colorings are known.
Precisely, the chromatic number of this graph is~$q$ and (for $q>2$) each $q$-coloring is given by a
coordinate coloring, that is, by simply assigning the value of a fixed coordinate as the color; see~\cite{GrLo74} by Greenwell/Lov\'{a}sz
as well as~\cite{ADFS04} by Alon et al., where -- among many other results -- the authors present an alternative
proof based on Fourier analytical methods.
By applying a certain relation between a function and its Fourier transform to the indicator function of a maximal
independent set they derive the exact structure of that set.

We will begin with discussing the chromatic number of $H_q(n,d)$.
This discussion is closely related to~\cite{RGSS07} by Rouayheb et al., where the authors link bounds on codes to properties
of the Hamming graphs.
In~\cite{JaMa06}, Jamison/Matthews study the chromatic number of the graph complement of $H_q(n,d)$ for various sets of parameters.
However, their results combined with the well-known Nordhaus-Gaddum relations between the chromatic numbers of a graph and its complement do
not lead to any tighter bounds than those presented in this paper.

As for the chromatic number, it is straightforward to generalize the idea of coordinate colorings
of $H_q(n,n)$ in~\cite{ADFS04} to $H_q(n,d)$.
This trivially leads to the upper bound $q^{n-d+1}$ for the chromatic number of $H_q(n,d)$.
As a consequence, the chromatic number of $H_2(n,n-1)$ is~$4$ because Payan~\cite{Pa92} showed that the chromatic number
of a nonbipartite cube-like graph is at least~$4$.
Furthermore, with the aid of the Erd\H{o}s-Ko-Rado Theorem for integer sequences~\cite{AhKh98,FrTo99,Kl66} it can be shown that
for various other sets of parameters,
$q^{n-d+1}$ is in fact the chromatic number; see Corollary~\ref{C-ConjCases} for details.
The general case, however, remains open (to the best of our knowledge).
This includes $H_2(n,d)$ for most pairs $(n,d)$ with $d<n-1$.

In the main part of the paper we will focus on the graphs $H_2(n,n-1)$, which have chromatic number~$4$.
We show that -- different from $H_q(n,n)$ -- these graphs enjoy $4$-colorings that are not coordinate colorings.
Even more, they also allow uneven $4$-colorings (i.e., the color classes do not have the same size).
This irregular structure of maximal independent sets and minimal colorings suggests that the Fourier analytical methods of~\cite{ADFS04}
are unlikely to be of any help in order to determine the exact chromatic number for general $H_q(n,d)$.

The abundance of genuinely different $4$-colorings of $H_2(n,n-1)$ gives rise to the question of whether the colorings can be ranked in a certain way.
We approach this question by introducing a measure of robustness, which  is based on
the number of transition edges.
We call an edge a transition edge of a given coloring if swapping the colors of the two incident vertices leads to a new proper coloring.
The robustness is simply the fraction of transition edges among all edges.
We show that if~$n$ is odd the coordinate colorings are exactly the maximally robust $4$-colorings of $H_2(n,n-1)$, while for even~$n$
there is an additional (natural) type of coloring with maximal robustness.
Furthermore, we show that a $4$-coloring is maximally robust if and only if the transition edges tile the graph in disjoint $4$-cycles.

Throughout the paper we use the following terminology and notation.
For any graph~$G$ we denote by $V(G)$ its vertex set and by $E(G)$ its edge set.
Thus $E(G)\subseteq V(G)\times V(G)$.
All graphs have neither loops nor multiple edges.
Furthermore, graphs are undirected, and thus
$(x,y)\in E(G)\Longleftrightarrow (y,x)\in E(G)$.
We also use the notation $\sim$ for adjacency, thus $x\sim y\Longleftrightarrow (x,y)\in E(G)$.
Let $\cN(x):=\{y\in V(G)\mid y\sim x\}$ denote the set of neighbors of vertex $x\in V(G)$.
For any graph~$G$ we define the following invariants:
$\alpha(G)$ denotes its independence number, that is, the maximal size of an independent set;
$\omega(G)$ is the clique number, that is, the maximal size of a clique in~$G$;
$\girth(G)$ denotes the girth, that is, the length of a shortest cycle.
A proper $k$-coloring is a vertex coloring such that adjacent vertices have different colors.
We only consider proper colorings and usually omit the qualifier `proper' unless we explicitly discuss properness.
We denote by~$\chi(G)$ the chromatic number, that is, the minimal~$k$ such that~$G$ has a proper $k$-coloring.
Any $\chi(G)$-coloring is called a \emph{minimal coloring}.
A coloring is called \emph{even} if all color classes (the sets of vertices with the same color) have the same cardinality.
Finally, for any $x,\,y\in V(G)$ we define $\dd_G(x,y)$ as the graph distance between~$x$ and~$y$, that is, the length of a
shortest path from~$x$ to~$y$.
The vector with all entries equal to~$1$ is denoted by~$\mbone$.
Its length will be clear from the context.
We set $[n]:=\{1,\ldots,n\}$.

\section{Basic Properties of the Hamming-Distance Graphs}
Let~$q,\,n\in\N_{\geq2}$ and let~$A$ be a set of cardinality~$q$.
For any two vectors $x,y$ in $A^n$ we denote by $\dd(x,y):=|\{i\mid x_i\neq y_i\}|$ the Hamming distance between~$x$ and~$y$ and
by $\wt(x):=\dd(x,0)$ the Hamming weight of~$x$.
Recall that a \emph{$q$-ary block code} of length~$n$ and Hamming distance~$d$ over the alphabet~$A$ is a subset, say~$\cC$, of~$A^n$
such that $\dd(x,y)\geq d$ for all distinct elements $x,y\in\cC$.
The block code is called \emph{linear} if~$A$ is a field of size~$q$ (thus~$q$ is a prime power) and $\cC$ is a subspace of the vector space~$A^n$.

Throughout this paper we choose $A=\Z_q:=\Z/q\Z$.
This is solely for convenience and any alphabet of cardinality~$q$ would work.
However, some arguments are simplified by using the group structure of~$\Z_q$, i.e., computing modulo~$q$.
In the very few cases where we consider linearity we will switch to a field of size~$q$.

From now on we assume $q,d,n\in\N$ such that
\begin{equation}\label{e-qnd}
   q,\,n\geq2,\quad 0<d\leq n,\quad (q,d)\neq(2,n).
\end{equation}

The following graph has been introduced and briefly discussed by Sloane~\cite{Sl89} and Rouayheb et al.~\cite{RGSS07}.

\begin{defi}\label{D-Ham}
Let $q,\,d,\,n\in\N$  be as in~\eqref{e-qnd}.
The \emph{Hamming-distance graph}, or simply \emph{Hamming graph}, $H_q(n,d)$ is defined as the graph with vertex set
$V:=\Z_q^n$ and edge set $E:=\{(x,y)\mid x,\,y\in\Z_q^n,\,\dd(x,y)\geq d\}$.
\end{defi}
We point out that in the literature the terminology ``Hamming graph'' sometimes refers to other graphs,
among them for instance our special case~$H_q(n,n)$.
In this paper the name exclusively refers to the graphs in the above definition.

Note that the graph $H_q(n,n)$ is the $n$-th weak power of the complete graph $K_q$ on~$q$ vertices; see also \cite[p.~914]{ADFS04}.

By definition of the Hamming graphs, the cliques form $q$-ary block codes of length~$n$ and Hamming distance
at least~$d$ and thus the clique number $\omega(H_q(n,d))$ is the maximal cardinality of such a code.
On the other hand, the independent sets are the anticodes in~$\Z_q^n$ of distance~$d-1$, that is, any two elements in such a set
have distance at most~$d-1$.

Let us briefly look at the case $(q,d)=(2,n)$, which is excluded in~\eqref{e-qnd}.
This case is genuinely different from the general one because
the graph $H_2(n,n)$ is disconnected with $2^{n-1}$ components each one consisting of one edge.
Indeed, every~$x\in\Z_2^n$ has only one neighbor, namely $x+\mbone$
(recall that $n\geq2$).
Thus $H_2(n,n)$ is entirely uninteresting and excluded from our considerations.

We list some basic properties of the Hamming graphs.
\begin{prop}\label{P-basics}\
\begin{alphalist}
\item $H_q(n,d)$ is regular of degree $\sum_{i=0}^{n-d}{n \choose i}(q-1)^{n-i}$.
\item $H_q(n,d)$ is vertex-transitive.
\item $H_q(n,1)$ is the complete graph on $q^n$ vertices.
\item The Hamming graph $H_q(n,d)$ is the undirected Cayley graph of the group
      $(\Z_q^n,+)$ and the subset $\cS_d=\{v\in\Z_q^n\mid \wt(v)\geq d\}$, which generates the group.
\item $H_q(n,d)$ is connected.
\end{alphalist}
\end{prop}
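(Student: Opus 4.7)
The plan is to dispatch the five items separately, relying throughout on the translation invariance of Hamming distance on $\Z_q^n$.

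For (a), I would fix $x\in\Z_q^n$ and, for each $k\in\{d,\ldots,n\}$, count the $y$ with $\dd(x,y)=k$: choose the $k$ coordinate positions in which $y$ differs from $x$ (in $\binom{n}{k}$ ways), then pick one of the $q-1$ nonzero values for $y-x$ in each such position. Summing over $k$ and reindexing via $i=n-k$ yields the stated closed form, and since the count is independent of $x$ the graph is regular. For (b), each translation $\tau_v\colon z\mapsto z+v$ preserves adjacency because $\dd(x+v,y+v)=\dd(x,y)$; setting $v=y-x$ gives a graph automorphism sending $x$ to $y$, hence vertex-transitivity. Statement (c) is immediate: distinct vertices in $\Z_q^n$ differ in at least one coordinate, so every two are adjacent in $H_q(n,1)$.

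For (d), I would verify from the definition that $(x,y)$ is an edge of the Cayley graph of $(\Z_q^n,+)$ with connection set $\cS_d$ exactly when $y-x\in\cS_d$, equivalently $\dd(x,y)=\wt(y-x)\geq d$; this coincides with the edge set of $H_q(n,d)$. Since $\wt(-v)=\wt(v)$, the set $\cS_d$ is symmetric and the Cayley graph is undirected. The only nontrivial point is that $\cS_d$ generates $\Z_q^n$ under the hypothesis~\eqref{e-qnd}, which I would establish by producing each standard basis vector $e_i$ (the vector with a $1$ in position $i$ and $0$ elsewhere) as a difference of two elements of $\cS_d$: if $d<n$, pick any weight-$d$ vector $v$ with $v_i=0$ and set $w=v+e_i$, so $v,w\in\cS_d$ and $w-v=e_i$; if $d=n$, then $q\geq 3$ by~\eqref{e-qnd}, and $v=\mbone$, $w=\mbone+e_i$ both have all nonzero coordinates and differ by $e_i$. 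Finally, (e) is immediate from (d), since any Cayley graph whose connection set generates the group is connected.

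The only step requiring any genuine care is the generation argument in (d); the hypothesis $(q,d)\neq(2,n)$ is exactly what rules out the degenerate case in which $\cS_d=\{\mbone\}$ generates only the subgroup $\{0,\mbone\}\subsetneq\Z_2^n$, and the case split above covers all remaining parameters.
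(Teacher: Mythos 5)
Your proof is correct and follows essentially the same route as the paper: a direct count for (a), translation automorphisms for (b), the trivial observation for (c), identification with the Cayley graph for (d), and connectedness as a consequence for (e). The only (minor) divergence is in the generation argument of (d): the paper exhibits an explicit basis of the $\Z_q$-module $\Z_q^n$ lying inside $\cS_d$, namely $(1,\ldots,1),(2,1,\ldots,1),\ldots,(1,\ldots,1,2,1)$, whereas you obtain each $e_i$ as a difference of two elements of $\cS_d$ via a case split on $d<n$ versus $d=n$; both are valid and equally short.
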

\begin{proof}
(a) For any vertex~$x\in\Z_q^n$ the neighbors are exactly those vectors that agree with~$x$ in at most~$n-d$ coordinates.
There are clearly ${n \choose i}(q-1)^{n-i}$ vectors that share exactly~$i$ coordinates with~$x$. Summing over all possible~$i$
yields the desired formula.
\\
(b) For any two vertices $x,\,y\in\Z_q^n$ the map $\Z_q^n\longrightarrow \Z_q^n,\ v\longmapsto v+(x-y)$ defines a graph
automorphism that maps~$y$ to~$x$.
\\
(c) is trivial.
\\
(d) Since $(q,d)\neq(2,n)$, the set~$\cS_d$ indeed generates the group $\Z_q^n$. Even more, the vectors
$(1,1,\ldots,1),(2,1,\ldots,1),(1,2,1\ldots,1),\ldots,(1,\ldots,1,2,1)$ form a basis of the $\Z_q$-module~$\Z_q^n$
and are in~$\cS_d$ for any admissible~$d$.
By the very definition of the Hamming graph, $x,\,y\in\Z_q^n$ are adjacent in $H_q(n,d)$ iff $y=x+v$ for some $v\in\cS_d$.
This shows that the graph is the stated Cayley graph.
\\
(e) follows from~(d).
\end{proof}

\begin{prop}\label{P-cycles}
Let $G:=H_q(n,d)$.
\begin{alphalist}
\item Let $q\geq3$. Then $\dd_G(x,y)\leq 2$ for all $x,y\in V(G)$ and $\girth(G)=3$.
\item Let $q=2$ and $d\leq\frac{2n}{3}$. Then $\girth(G)=3$.
\item Let $q=2$ and $\frac{2n}{3}<d<n$. Then $\girth(G)=4$.
      Moreover,~$G$ contains a cycle of odd length.
\end{alphalist}
\end{prop}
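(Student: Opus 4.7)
The plan is to treat the three parts separately, using vertex-transitivity (Proposition~\ref{P-basics}(b)) to place one vertex of any putative short cycle at the origin, and the identity
\[
\dd(y,z) = \wt(y) + \wt(z) - 2\,|\mbox{supp}(y) \cap \mbox{supp}(z)| \qquad (y, z \in \Z_2^n)
\]
throughout the binary parts (b) and (c). For \textbf{Part (a)}, the key observation is that when $q\geq 3$ one can always pick, in each coordinate, a symbol distinct from two given ones. Given $x, y \in \Z_q^n$, I define $z \in \Z_q^n$ by choosing $z_i \in \Z_q \setminus\{x_i, y_i\}$ in each coordinate; then $\dd(z, x) = \dd(z, y) = n \geq d$, so $z$ is adjacent to both and $\dd_G(x,y) \leq 2$. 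For the girth in case (a), the three vectors $0$, $\mbone$, $2\mbone$ are pairwise at Hamming distance $n \geq d$ and form a triangle.

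For \textbf{Part (b)}, I plan to construct a triangle $\{0, y, z\}$ with $\wt(y) = \wt(z) = d$. The identity reduces this to finding two $d$-subsets of $[n]$ whose intersection $s$ satisfies $2d - 2s \geq d$, equivalently $s \leq d/2$. The minimum achievable overlap is $\max(0, 2d - n)$ (realised by placing the two supports as far apart as possible inside $[n]$), which is at most $d/2$ precisely when $d \leq 2n/3$, so a triangle exists under this hypothesis.

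For \textbf{Part (c)} I argue in three steps. \emph{No triangle:} if $\{0, y, z\}$ were a triangle, then $\wt(y), \wt(z) \geq d$, and the identity combined with the elementary inequality $|\mbox{supp}(y)\cap\mbox{supp}(z)| \geq \wt(y) + \wt(z) - n$ yields $\dd(y,z) \leq 2n - \wt(y) - \wt(z) \leq 2n - 2d < d$, contradicting $d > 2n/3$; so $\girth(G) \geq 4$. \emph{A $4$-cycle:} for any two distinct nonzero $y, z \in \cS_d$ the closed walk $0 \to y \to y + z \to z \to 0$ visits four distinct vertices, and each successive difference (namely $y, z, y, z$) lies in $\cS_d$, so it is a genuine $4$-cycle; since $d \leq n-1$ I may take $y = \mbone$ and $z$ any vector of weight $n-1$. \emph{Odd cycle:} I would show $G$ is not bipartite via the Cayley-graph description (Proposition~\ref{P-basics}(d)): $G$ is the Cayley graph of the abelian group $\Z_2^n$ on the generating set $\cS_d$, and such a graph is bipartite if and only if there is a group homomorphism $\phi\colon\Z_2^n\to\Z_2$ with $\phi\equiv 1$ on $\cS_d$. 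Since $\mbone$ and $\mbone + v$ both lie in $\cS_d$ for every weight-$1$ vector $v$ (using $d \leq n-1$), such a $\phi$ would satisfy $\phi(v) = \phi(\mbone) + \phi(\mbone + v) = 1 + 1 = 0$ for every weight-$1$ $v$, forcing $\phi \equiv 0$, contradiction.

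The main obstacle is the odd-cycle claim in part (c): because there are no triangles, any odd cycle must have length at least five, so rather than exhibiting one directly I would rely on the structural Cayley-graph/non-bipartiteness argument above, which converts the combinatorial question into a one-line linear-algebra check on $\cS_d$.
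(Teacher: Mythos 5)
Your proposal is correct. Parts (a) and (b) coincide with the paper's argument (the same coordinatewise choice of an intermediate vertex $z$, the same triangle $0,\mbone,2\cdot\mbone$, and the same two weight-$d$ vectors with minimal overlap, just organized around the support-intersection identity rather than a case split on $n\gtrless 2d$), and the triangle-free and $4$-cycle steps of part (c) are likewise the paper's argument in equivalent form; your $4$-cycle $0,y,y+z,z$ with $y=\mbone$ and $\wt(z)=n-1$ is exactly the paper's cycle $0,\mbone,(1,0,\ldots,0),(0,1,\ldots,1)$ up to relabeling. The genuine divergence is the odd-cycle claim: the paper exhibits an explicit odd closed walk $0,f_1,f_1+f_2,\ldots,\sum_{i=1}^n f_i$ of length $n$ (or $n+1$ when $n$ is even), whereas you prove non-bipartiteness abstractly via the criterion that a connected Cayley graph of $\Z_2^n$ on a generating set $S$ is bipartite iff some homomorphism $\phi\colon\Z_2^n\to\Z_2$ is identically $1$ on $S$. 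Your one-line computation $\phi(e_i)=\phi(\mbone)+\phi(\mbone+e_i)=0$ is a clean way to finish, but note that the direction of the criterion you actually use (bipartite $\Rightarrow$ such a $\phi$ exists) is the nontrivial one and is asserted without proof; it does hold (e.g.\ because translation by a group element either preserves or swaps the two parts of the unique bipartition of a connected bipartite graph, and this parity is a homomorphism sent to $1$ by every generator), so you should supply that sentence. The trade-off is that the paper's construction is self-contained and even tells you the length of an odd cycle, while your argument is shorter once the Cayley-graph lemma is in hand and would generalize to other generating sets without having to design a walk by hand.
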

\begin{proof}
(a) Let $x,\,y\in\Z_q^n$ be any distinct vertices. Since $q\geq3$ there exist $z_i\in\Z_q$ such that $x_i\neq z_i\neq y_i$ for all $i=1,\ldots,n$.
Thus $(x,z),\,(z,y)\in E(G)$, and we have a path of length~$2$.
The girth follows from the cycle $0,\,\mbone,\,(2,\ldots,2)$. 
\\
(b) Consider the vertices $0,\,x,\,y$, where $x=(1,\ldots,1,0,\ldots,0)$ and $y=(0,\ldots,0,1\ldots,1)$ with both having
exactly~$d$ entries equal to~$1$.
If $n\geq 2d$, then $\dd(x,y)=2d$ and thus~$0,\,x,\,y$ form a cycle of length~$3$.
Let $n<2d$.
Then~$x$ and~$y$ overlap in exactly $2d-n$ entries (which are equal to~$1$), and therefore $\dd(x,y)=2n-2d\geq 3d-2d=d$.
Again, the three vertices form a cycle of length~$3$.
\\
(c)
First of all, the vertices $0,\,\mbone,\,(1,0,\ldots,0),\,(0,1,\ldots,1)$ form a cycle of length~$4$ in the Hamming graph.
By vertex-transitivity it now suffices to show that~$0$ is not contained in a cycle of length~$3$.
Suppose $x,y\in\Z_q^n$ are adjacent to~$0$. Then $\wt(x),\,\wt(y)\geq d$.
Moreover,  $x_i=1=y_i$ for at least $2d-n>\frac{n}{3}$ positions and therefore $\dd(x,y)<n-\frac{n}{3}=\frac{2n}{3}<d$.
Hence $x,\,y$ are not adjacent, and this shows that the graph has no cycle of length~$3$.
A cycle of odd length is obtained as follows.
For $i=1,\ldots,n$ set $f_i:=(1,\ldots,1,0,1,\ldots,1)$, where~$0$ is at position~$i$.
If~$n$ is odd the vertices $0,f_1,f_1+f_2,\ldots,\sum_{i=1}^n f_i=0$ form a cycle of length~$n$.
If $n$ is even the cycle
$0,\,f_1,\,f_1+f_2,\ldots,\sum_{i=1}^nf_i=\mbone,\,0$ has length~$n+1$.
\end{proof}

\section{On the Chromatic Number of the Hamming Graph}\label{S-chi}
There is an obvious way to color any Hamming graph, and it will play a central role in our investigation of  colorings.
\begin{defi}\label{D-CoordCol}
Consider the Hamming graph $H_q(n,d)$.
Fix $1\leq i_1<i_2<\ldots<i_{n-d+1}\leq n$.
The $(i_1,\ldots,i_{n-d+1})$-\emph{coordinate coloring} of $H_q(n,d)$ is defined as
\[
    K:\Z_q^n\longrightarrow \Z_q^{n-d+1},\ (x_1,\ldots,x_n)\longmapsto (x_{i_1},\ldots,x_{i_{n-d+1}}).
\]
It is a proper $q^{n-d+1}$-coloring of $H_q(n,d)$.
\end{defi}
The properness of~$K$ follows from the fact that adjacent vertices have Hamming distance at least~$d$ and therefore
cannot agree on any $n-d+1$ coordinates.
As a consequence,
\begin{equation}\label{e-chi}
  \alpha(H_q(n,d))\geq q^{d-1}\ \text{ and }\ \chi(H_q(n,d))\leq q^{n-d+1}.
\end{equation}

\begin{que}\label{Conj-chi}
Is $\chi(H_q(n,d))= q^{n-d+1}$?
\end{que}

In this section we discuss existing results and several methods to determine the chromatic number explicitly, thereby answering the
question in the affirmative for various sets of parameters.

First of all, we trivially have $\chi(H_q(n,1))=q^n$  because $H_q(n,1)$ is the complete graph on~$q^n$ vertices.
Next, Payan~\cite{Pa92} proved the surprising fact that no nonbipartite cube-like graph can have chromatic number less than~$4$.
Since the binary Hamming graph $H_2(n,d)$ is the cube-like graph $Q_n[d,d+1,\ldots,n]$ we obtain from~\eqref{e-chi}
\begin{theo}\label{T-H2nnm1}
$\chi(H_2(n,n-1))=4$.
\end{theo}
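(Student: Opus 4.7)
The plan is to combine the coordinate-coloring upper bound already recorded in \eqref{e-chi} with Payan's theorem on cube-like graphs, exactly as foreshadowed in the introduction. First, I would specialize \eqref{e-chi} to $(q,n,d) = (2,n,n-1)$: any coordinate coloring picking out two fixed positions is a proper coloring with $2^{n-(n-1)+1} = 4$ colors, so $\chi(H_2(n,n-1)) \leq 4$. This is the easy half.

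For the matching lower bound, I would first dispose of the degenerate case $n=2$, where $H_2(2,1) = K_4$ by Proposition~\ref{P-basics}(c) and the chromatic number is trivially $4$. For $n \geq 3$ the plan is to invoke Payan's result~\cite{Pa92}, which states that every nonbipartite cube-like graph has chromatic number at least $4$. I then only need to verify the two hypotheses of that theorem. The cube-like property is immediate from Proposition~\ref{P-basics}(d): $H_2(n,n-1)$ is presented there as the Cayley graph of $(\Z_2^n,+)$ with connection set $\cS_{n-1} = \{v \in \Z_2^n \mid \wt(v) \geq n-1\}$, and in characteristic $2$ this set is automatically symmetric, so the graph fits the standard definition of a cube-like graph.

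The remaining hypothesis, nonbipartiteness, I would obtain by producing an explicit odd cycle, drawing on Proposition~\ref{P-cycles}. For $n=3$ we have $d = 2 = \tfrac{2n}{3}$, so part~(b) applies and supplies a triangle. For $n \geq 4$ we have $\tfrac{2n}{3} < n-1 < n$, so part~(c) supplies an odd cycle (of length $n$ if $n$ is odd and $n+1$ if $n$ is even). In either subcase $H_2(n,n-1)$ is nonbipartite, Payan's theorem yields $\chi(H_2(n,n-1)) \geq 4$, and combining with the upper bound finishes the proof.

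There is no genuine obstacle inside the argument itself: the deep input is Payan's theorem, which we cite as a black box, and everything else is bookkeeping against results already established in Sections~1--2. The only small point to watch is that Payan's theorem is vacuous for bipartite graphs, which forces the case split above and the explicit appeal to Proposition~\ref{P-cycles} to rule out bipartiteness for every $n \geq 3$.
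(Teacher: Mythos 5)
Your proof is correct and follows exactly the paper's route: the coordinate-coloring upper bound $\chi\leq 4$ from \eqref{e-chi} combined with Payan's lower bound for nonbipartite cube-like graphs. The paper states this more tersely, leaving the nonbipartiteness check (which indeed follows from Proposition~\ref{P-cycles}) and the trivial case $n=2$ implicit, so your extra bookkeeping is just a more careful write-up of the same argument.
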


Now we turn to more general cases.
For a proper $r$-coloring of $H_q(n,d)$ the color classes, that is, the set of all vertices with the same color, are independent sets,
and thus the existence of a proper $r$-coloring implies the existence of an independent set of cardinality at least $\lceil{q^n/r}\rceil$.
This leads to the sufficient condition
\begin{equation}\label{e-IndSetChi}
  \alpha(H_q(n,d))<\Big\lceil{\frac{q^n}{q^{n-d+1}-1}}\Big\rceil\Longrightarrow \chi(H_q(n,d))= q^{n-d+1}.
\end{equation}
Furthermore,
\begin{equation}\label{e-EvenCol}
  \alpha(H_q(n,d))=q^{d-1}\ \Longrightarrow\ \text{ each $q^{n-d+1}$-coloring of $H_q(n,d)$ is even.}
\end{equation}
One cannot expect~\eqref{e-IndSetChi} to be very fruitful in determining the chromatic number because it is only a sufficient condition.
In other words, the existence of one independent set with at least $\lceil q^n/(q^{n-d+1}-1)\rceil$ elements does not imply that
$\chi(H_q(n,d))< q^{n-d+1}$ because the latter requires the existence of multiple independent sets
(possibly of varying cardinality).
Yet, some cases can indeed be settled this way as the independence number of the Hamming graph is known for most sets of parameters.
This result is known as the Erd\H{o}s-Ko-Rado Theorem for integer sequences
and has been proven by Kleitman~\cite{Kl66} for the binary case
and Ahlswede/Khachatrian~\cite{AhKh98} as well as Frankl/Tokushige~\cite{FrTo99} for the general case.
In our notation it states the following.

\begin{theo}\label{T-alpha}\
\begin{alphalist}
\item \cite{Kl66} or~\cite[Thm.~K1]{AhKh98} Let $q=2$. Then
      \[
         \alpha(H_2(n,d))=\left\{\begin{array}{cl} {\Ds\sum_{i=0}^{\frac{d-1}{2}}{n\choose i}},&\text{ if $d$ is odd,}\\[1.5ex]
                                                   {\Ds 2\sum_{i=0}^{\frac{d-2}{2}}{n-1\choose i}},&\text{ if $d$ is even.}
                          \end{array}\right.
      \]
\item \cite[Thm.~2 and pp.~57-58, Cor.~1]{FrTo99} Let $q\geq3$. Set $r:=\lfloor{\frac{n-d}{q-2}}\rfloor$. Then
      \[
        \alpha(H_q(n,d))\leq\Big\lfloor q^{d-1-2r}\sum_{i=0}^r{n-d+1+2r\choose i}(q-1)^i\Big\rfloor,
      \]
      with equality if $d\geq 2r+1$. In particular, $\alpha(H_q(n,d))=q^{d-1}$ for $d\geq n-q+2$ (and $d>0$).
\end{alphalist}
\end{theo}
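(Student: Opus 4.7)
The plan is to establish the formulas by proving matching lower and upper bounds on the maximum size of an anticode of diameter $d-1$ in $\Z_q^n$; by Definition~\ref{D-Ham}, this is exactly $\alpha(H_q(n,d))$.

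For the lower bounds in (a), I would exhibit explicit anticodes of the required size. When $d=2e+1$ is odd, the Hamming ball $B(0,e)=\{x\in\Z_2^n\mid \wt(x)\leq e\}$ has diameter at most $2e=d-1$ and cardinality $\sum_{i=0}^{e}\binom{n}{i}$. When $d=2e$ is even, the set $A_0\cup A_1$, where $A_c:=\{x\in\Z_2^n\mid x_1=c,\ \wt(x_2,\ldots,x_n)\leq e-1\}$, has diameter $1+2(e-1)=d-1$ and cardinality $2\sum_{i=0}^{e-1}\binom{n-1}{i}$. For (b), analogous constructions built from spheres around suitably chosen centers in $\Z_q^n$ realize the stated bound when $d\geq 2r+1$.

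For the upper bounds, my approach would be the shifting/compression technique. For each ordered pair of coordinates $(i,j)$ with $i<j$, define an operator $S_{ij}$ on subsets $A\subseteq\Z_q^n$: for $x\in A$, replace $x$ by the vector obtained by swapping $x_i$ and $x_j$ so that the smaller value appears first, provided the result does not already lie in $A$; otherwise leave $x$ fixed. One checks directly that $|S_{ij}(A)|=|A|$ and that the Hamming diameter of $S_{ij}(A)$ does not exceed that of $A$. Iterating these shifts until stability yields a \emph{left-compressed} anticode, which has enough additional structure that its size can be bounded by induction on $n$: the inductive step splits the anticode according to the value of the first coordinate and applies the result in dimension $n-1$ to each slice, combining them via the diameter constraint between the slices.

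The main obstacle I foresee is the upper bound in case (b). In the binary setting the extremal anticodes are either balls (for odd $d$) or unions of two hyperplane-restricted balls (for even $d$), and this dichotomy fits cleanly into the inductive scheme. For $q\geq 3$ the extremal configurations depend delicately on the parameter $r=\lfloor(n-d)/(q-2)\rfloor$, and several candidates for the extremum must be compared. The combinatorial estimates required to track the dominant configuration---and to pinpoint exactly the range $d\geq 2r+1$ in which equality holds---constitute the bulk of the technical work, as carried out in~\cite{AhKh98,FrTo99}; I would expect to simply invoke those results rather than reprove the full Erd\H{o}s--Ko--Rado theorem for integer sequences from scratch.
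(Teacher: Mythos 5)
The paper gives no proof of this theorem at all: it is quoted verbatim from Kleitman~\cite{Kl66}, Ahlswede--Khachatrian~\cite{AhKh98}, and Frankl--Tokushige~\cite{FrTo99}, which is exactly what you ultimately propose to do. Your supplementary lower-bound constructions are correct (the radius-$\lfloor (d-1)/2\rfloor$ ball for odd $d$, and the union of two hyperplane-restricted balls for even $d$), though I note that the coordinate-sorting operator $S_{ij}$ you sketch is the compression used for intersecting set families rather than for the diametric (anticode) problem --- the relevant shifts here push symbol values toward $0$ in a fixed coordinate --- but since you defer the upper bounds to the cited papers anyway, this does not affect the outcome.
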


For $d=n$ the very last part appears also in \cite[Claim 4.1]{ADFS04}.
In Theorem~1.1 of the same paper it is then shown that each maximal independent set of $H_q(n,n)$ (where $q\geq3$) is of the form
$\cI_{i,\alpha}=\{v\in\Z_q^n\mid v_i=\alpha\}$ for some $i\in[n]$ and $\alpha\in\Z_q$.
This allows one to conclude that each $q$-coloring is a coordinate coloring.
Below we will see that for general $H_q(n,d)$ the maximal independent sets do not have a nice structure.
Moreover, for $d<n$, even if each maximal independent set is the color class of a coordinate coloring, this does not allow one
to conclude that each minimal coloring is a coordinate coloring; see Proposition~\ref{P-qn2} and Example~\ref{E-ColH232}.

For $d<n-q+2$, the upper bound in part~(b) may not be sharp. For instance, for $d=1$ and $n=2q-1$ one obtains
$\lfloor q^{d-1-2r}\sum_{i=0}^r{n-d+1+2r\choose i}(q-1)^i\rfloor=2$, whereas, of course, $\alpha(H_q(n,1))=1$ since $H_q(n,1)$
is the complete graph on $q^n$ vertices.

Now we obtain the following affirmative answers to Question~\ref{Conj-chi}.
It shows that the above method is not very fruitful for the binary case.
Part~(b) below can also be found in \cite[Lem.~18]{RGSS07}.

\begin{cor}\label{C-ConjCases}\
\begin{alphalist}
\item Let $q=2$. Then $\chi(H_2(n,2))=2^{n-1}$ for all $n\in\N_{\geq2}$.
\item Let $q\geq3$. Then $\chi(H_q(n,d))=q^{n-d+1}$ for $d\in\{n-q+2,\ldots,n\}$  (and $d>0$).
      Furthermore, each $q^{n-d+1}$-coloring is even.
\end{alphalist}
\end{cor}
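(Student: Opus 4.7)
The plan is to combine the coordinate-coloring upper bound $\chi(H_q(n,d))\leq q^{n-d+1}$ from Definition~\ref{D-CoordCol} with the Erd\H{o}s-Ko-Rado-type values of $\alpha(H_q(n,d))$ in Theorem~\ref{T-alpha}, applying the pigeonhole implications \eqref{e-IndSetChi} and \eqref{e-EvenCol}.

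For part~(a), the coordinate coloring yields $\chi(H_2(n,2))\leq 2^{n-1}$. Theorem~\ref{T-alpha}(a) with $d=2$ (even case) gives $\alpha(H_2(n,2))=2\binom{n-1}{0}=2$. A brief arithmetic check shows that $2^n/(2^{n-1}-1)=2+2/(2^{n-1}-1)$ lies in $(2,3]$ for every admissible $n\geq 3$, so $\lceil 2^n/(2^{n-1}-1)\rceil=3>2=\alpha(H_2(n,2))$. Implication~\eqref{e-IndSetChi} then forces $\chi(H_2(n,2))=2^{n-1}$.

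For part~(b), the upper bound is again the coordinate coloring. The sharp half of Theorem~\ref{T-alpha}(b) asserts $\alpha(H_q(n,d))=q^{d-1}$ whenever $d\geq n-q+2$. Since any proper $r$-coloring partitions $\Z_q^n$ into $r$ independent sets of size at most $q^{d-1}$, we must have $r\,q^{d-1}\geq q^n$, i.e., $r\geq q^{n-d+1}$. Combined with the upper bound this proves $\chi(H_q(n,d))=q^{n-d+1}$. Evenness of every minimal coloring then follows from \eqref{e-EvenCol}: the $q^{n-d+1}$ color classes have sizes at most $q^{d-1}$ and sum to $q^n=q^{n-d+1}\cdot q^{d-1}$, forcing each to contain exactly $q^{d-1}$ vertices.

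I do not anticipate a genuine obstacle: the corollary is essentially a direct extraction of chromatic-number information from Theorem~\ref{T-alpha} through a one-line pigeonhole on color classes. The only care needed is the elementary ceiling computation in part~(a) that enables the application of \eqref{e-IndSetChi}; everything else is a one-line consequence of the tools already assembled in Section~\ref{S-chi}.
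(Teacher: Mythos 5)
Your proposal is correct and follows essentially the same route as the paper, which simply verifies the hypothesis of \eqref{e-IndSetChi} using the values of $\alpha$ from Theorem~\ref{T-alpha} and then invokes \eqref{e-EvenCol} for evenness. Your part~(b) phrasing of the lower bound as the direct pigeonhole $r\,q^{d-1}\geq q^n$ is just an unwrapped form of \eqref{e-IndSetChi} (and your evenness argument is exactly \eqref{e-EvenCol}), so there is no substantive difference.
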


\pagebreak
\begin{proof}
In all given cases one easily verifies the left hand side of~\eqref{e-IndSetChi}.
The last part follows from~\eqref{e-EvenCol}.
\end{proof}

In Example~\ref{E-ColH243} we will see that not all $4$-colorings of $H_2(4,3)$ are even.
An uneven coloring is possible because $\alpha(H_2(4,3))=5>2^{d-1}=4$.
Another uneven $q^{n-d+1}$-coloring is given for $H_3(5,3)$ in Section~\ref{S-OpenProb} showing that the last part of
Corollary~\ref{C-ConjCases}(b) is not true for $d<n-q+2$.

The following case, where $d=2$, is not fully covered by above considerations, but can easily be dealt with in a direct fashion.
\begin{prop}\label{P-qn2}
$\chi(H_q(n,2))=q^{n-1}$ for any~$q$ and each $q^{n-1}$-coloring is even.
\end{prop}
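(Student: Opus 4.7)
The plan is to show that $\alpha(H_q(n,2))=q$ and then combine this with the coordinate coloring upper bound from Definition~\ref{D-CoordCol}.

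The first step is a direct structural argument about independent sets in $H_q(n,2)$. An independent set $S$ in $H_q(n,2)$ is an anticode of distance at most~$1$, i.e., any two distinct vectors in~$S$ differ in exactly one coordinate. I would argue that all elements of~$S$ differ from a fixed vector in a common coordinate: pick $x\in S$, and if $|S|\geq 2$, pick $y\in S\setminus\{x\}$ which differs from~$x$ in some unique position~$i$. For any other $z\in S\setminus\{x\}$, the vector~$z$ differs from~$x$ in a unique position~$j$, and if $j\neq i$ then $\dd(y,z)=2$, contradicting $(y,z)\notin E(H_q(n,2))$. Hence $j=i$, so $S\subseteq\{x+\lambda e_i\mid \lambda\in\Z_q\}$, where $e_i$ is the $i$-th standard basis vector. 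This gives $|S|\leq q$, and any coordinate color class of size~$q$ from Definition~\ref{D-CoordCol} shows $\alpha(H_q(n,2))=q$.

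The second step assembles the bounds. The $(1,\ldots,n-1)$-coordinate coloring gives $\chi(H_q(n,2))\leq q^{n-1}$ by~\eqref{e-chi}. Conversely, in any proper $r$-coloring each color class is an independent set of size at most~$q$, hence $r\geq q^n/q = q^{n-1}$. This yields $\chi(H_q(n,2))=q^{n-1}$. Finally, since $\alpha(H_q(n,2))=q=q^{d-1}$ for $d=2$, the implication~\eqref{e-EvenCol} forces each $q^{n-1}$-coloring to be even; equivalently, the pigeonhole principle applied to $q^n$ vertices distributed among $q^{n-1}$ color classes of size at most~$q$ forces each class to have exactly~$q$ elements.

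There is no substantial obstacle here; the only nontrivial ingredient is the structural observation about independent sets, which reduces to a short contradiction argument. Everything else follows mechanically from Section~\ref{S-chi}.
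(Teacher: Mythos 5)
Your proof is correct and follows essentially the same route as the paper: both arguments determine the structure of (maximal) independent sets in $H_q(n,2)$ --- showing each is contained in a line $\{v+\alpha e_j\mid\alpha\in\Z_q\}$ and hence $\alpha(H_q(n,2))=q$ --- and then conclude via the counting bound underlying~\eqref{e-IndSetChi} together with~\eqref{e-EvenCol}. The only cosmetic difference is that you derive the contradiction from $\dd(y,z)=2$ directly, while the paper phrases the same step as forcing every further vertex to differ from $v$ and $w$ only in the first coordinate.
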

\begin{proof}
We show that each maximal independent set is of the form $\{v+\alpha e_j\mid \alpha\in\Z_q\}$ for some $v\in\Z_q^n$ and some~$j$, and
where~$e_j$ is the $j$-th standard basis vector.
Suppose~$I$ is a maximal independent set containing the two distinct vertices $v,w\in\Z_q^n$.
Then $\dd(v,w)<2$, and thus~$v,\,w$ differ in exactly one position.
Without loss of generality let $v_1\neq w_1$.
But then any other vertex contained in~$I$, say~$z$, satisfies $\dd(z,v)=1=\dd(z,w)$,
and therefore~$z$ also differs from~$v$ and~$w$ only in the first coordinate.
As a consequence, maximality of~$I$ yields $I=\{(\alpha,v_2,\ldots,v_n)\mid \alpha\in\Z_q\}=\{v+\alpha e_1\mid \alpha\in\Z_q\}$,
and thus $|I|=q$.
Now the statements follow from~\eqref{e-IndSetChi} and~\eqref{e-EvenCol}.
\end{proof}

The proof shows that each maximal independent set of $H_q(n,2)$ is the color class of a suitable coordinate coloring (based on $n-1$ coordinates).
However, this does not imply that each $q^{n-1}$-coloring is a coordinate coloring.
We will see this explicitly in Example~\ref{E-ColH232} below.

Since the chromatic number of any graph is always at least as large as the clique number, and cliques in $H_q(n,d)$ are codes
in $\Z_q^n$ of distance~$d$, an affirmative answer to Question~\ref{Conj-chi} can be deduced from the existence of MDS codes.
Recall that a $q$-ary code of length~$n$, cardinality~$M$, and distance~$d$ is called an (unrestricted) $(n,M,d)_q$-code.
The Singleton bound, see~\cite[Thm.~2.4.1]{HP03}, states that for such a code $M\leq q^{n-d+1}$.
Codes attaining this bound are called \emph{MDS codes}, thus they are $(n,q^{n-d+1},d)_q$-codes.
The existence of (unrestricted) MDS codes is a longstanding open problem for most parameters.
We refer to the literature on details about the conjecture; see for
instance~\cite{KKO15} by Kokkala et al.\ and the references therein.
We summarize the relation between existence of MDS codes and the chromatic number as follows.

\begin{rem}\label{R-MDS}
If there exists an $(n,q^{n-d+1},d)_q$-code, then $\chi(H_q(n,d))=q^{n-d+1}$ because
the MDS code in question forms a clique of size~$q^{n-d+1}$ in the Hamming graph.
Again, this is only a sufficient condition for $\chi(H_q(n,d))=q^{n-d+1}$.
For instance, by Theorem~\ref{T-H2nnm1} we have $\chi(H_2(n,n-1))=4$ for all $n$, but there exists no $(n,4,n-1)_2$-code for $n>3$.
The latter follows from the well-known non-existence of nontrivial binary MDS codes, but can also be deduced from the girth in
Proposition~\ref{P-cycles}(c).
Similarly, we have seen already in Corollary~\ref{C-ConjCases}(b) that $\chi(H_3(n,n-1))=9$ for all~$n\geq2$,
but there exists no $(n,9,n-1)_3$-code for $n\geq5$ thanks to the Plotkin bound~\cite[Thm.~2.2.1]{HP03}.
\end{rem}

Of course, for a single Hamming graph, if not too big, one can determine its chromatic number, and actually its colorings, via its
\emph{$r$-coloring ideal} for given~$r$.
For a graph $G=(V,E)$ this ideal is defined as
\begin{equation}\label{e-ColIdeal}
  \cI_r(G):=\big\langle \{x_v^r-1\mid v\in V\}\cup \{{\Ts\sum_{j=0}^{r-1} x_v^jx_w^{r-1-j}}\mid (v,w)\in E\}\big\rangle\subseteq\C[x_v\mid v\in V],
\end{equation}
where we associate independent indeterminates to the vertices of~$G$.
It is well known -- and easy to see -- that $G$ is $r$-colorable if and only if $\cI_r(G)\neq\C[x_v\mid v\in V]$.
In this case the zeros of~$\cI_r(G)$ in~$\C^{|V|}$ correspond to the $r$-colorings of~$G$
(the ideal $\cI_r(G)$ is radical; see Hillar/Windfeldt~\cite[Lem.~3.2]{HiWi08}).
Using this method we obtain
\begin{rem}\label{R-Small}
$\chi(H_2(n,n-2))=8$ for $4\leq n\leq 7$.
\end{rem}

We will briefly return to the $r$-coloring ideal in the next section when considering all $4$-colorings of $H_2(3,2)$.

\section{On the Minimal Colorings of $H_2(n,n-1)$}
In this section we study the $4$-colorings of the graphs~$H_2(n,n-1)$.
Recall from Theorem~\ref{T-H2nnm1} that these are indeed the minimal colorings.
The motivation of this study arose from the following result by Alon et al.~\cite{ADFS04}.
It has been proven already earlier by Greenwell/Lov\'{a}sz in~\cite{GrLo74}, but Alon et al.\ provide an interesting new proof based
on Fourier analysis on the group $\Z_q^n$.

\begin{theo}[\mbox{\cite[Thm.~1.1, Claim~4.1]{ADFS04}}]\label{T-qnn}
Let $q\geq3$. Then $\chi(H_q(n,n))= q$ and every $q$-coloring of $H_q(n,n)$ is a coordinate-coloring.
\end{theo}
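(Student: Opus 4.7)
The plan is to settle the chromatic number by elementary means and then reduce the uniqueness claim to a structural statement about the maximum independent sets of $H_q(n,n)$, which I would attack by Fourier analysis on $\Z_q^n$.

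For the chromatic number, the $q$ constant vectors $\{(\alpha,\ldots,\alpha):\alpha\in\Z_q\}$ form a clique of size $q$, yielding the lower bound, and the coordinate coloring $v\mapsto v_1$ yields the upper bound $\chi(H_q(n,n))=q$. By Theorem~\ref{T-alpha}(b) we have $\alpha(H_q(n,n))=q^{n-1}$, so in any $q$-coloring the color classes partition $\Z_q^n$ into $q$ maximum independent sets of size $q^{n-1}$. Granted the structural claim that each such set must be of the form $\cI_{i,\alpha}:=\{v\in\Z_q^n:v_i=\alpha\}$, the color classes $\cI_{i_1,\alpha_1},\ldots,\cI_{i_q,\alpha_q}$ must be pairwise disjoint; since $|\cI_{i,\alpha}\cap\cI_{i',\alpha'}|=q^{n-2}>0$ whenever $i\neq i'$, all the $i_j$ must coincide with a common index $i$, the $\alpha_j$ must exhaust $\Z_q$, and the coloring is, up to relabeling of colors, the coordinate coloring based on coordinate $i$.

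For the structural claim, let $f$ denote the indicator function of $I$ and $g$ that of $(\Z_q\setminus\{0\})^n\subseteq\Z_q^n$. Independence of $I$ says exactly $f(v)\,g(u)\,f(v+u)=0$ for all $v,u$, i.e., $f\cdot(g*f)\equiv 0$. Summing this relation over $v$ and applying Plancherel with the character $\omega:=e^{2\pi i/q}$, together with the explicit evaluation
\[
   \hat g(\xi)=(q-1)^{z(\xi)}(-1)^{n-z(\xi)},\qquad z(\xi):=|\{i:\xi_i=0\}|,
\]
collapses everything to the single scalar identity
\[
   \sum_{\xi\in\Z_q^n}|\hat f(\xi)|^2\,(q-1)^{z(\xi)}(-1)^{n-z(\xi)}=0.
\]
Isolating the $\xi=0$ contribution $|I|^2(q-1)^n=q^{2n-2}(q-1)^n$ and combining with Parseval's relation $\sum_{\xi\neq 0}|\hat f(\xi)|^2=q^{2n-2}(q-1)$ reveals that the triangle inequality applied to the remaining sum is tight. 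This forces $\hat f(\xi)=0$ for every $\xi\neq 0$ with $z(\xi)<n-1$, and Fourier inversion then yields the coordinate-separable form $f(x)=c_0+\sum_{i=1}^n h_i(x_i)$ for suitable $c_0\in\C$ and $h_i:\Z_q\to\C$.

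The remaining step is to deduce from $f\in\{0,1\}$ that $f$ depends on a single coordinate. If $h_i$ and $h_j$ were both non-constant, inspecting a $2\times 2$ rectangle $(x_i,x_j)\in\{a,b\}\times\{c,d\}$ on which both vary (with the other coordinates fixed) would force the four corner values of $f$ to lie in $\{0,1\}$ while pairwise differing by $\pm 1$, making the fourth corner equal $\pm 2$, a contradiction. Hence exactly one $h_i$ is non-constant, and $|I|=q^{n-1}$ pins down a unique $\alpha\in\Z_q$ with $f\equiv 1$ on $\{v_i=\alpha\}$, so $I=\cI_{i,\alpha}$. The main obstacle is the Fourier computation of the preceding paragraph: one must read off the tightness of the triangle inequality in the weighted identity correctly, and the hypothesis $q\geq 3$ enters precisely through the strict dominance of $(q-1)^{n-1}$ over $(q-1)^{z(\xi)}$ when $z(\xi)<n-1$---a dominance that fails for $q=2$ and indeed permits the many non-coordinate $2$-colorings of the perfect-matching graph $H_2(n,n)$.
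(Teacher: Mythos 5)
The paper offers no proof of this statement --- it is quoted from \cite{GrLo74} and \cite[Thm.~1.1, Claim~4.1]{ADFS04}, and the Fourier-analytic argument of the latter is exactly the route you take. Your reconstruction is correct: the clique of constant words together with a coordinate coloring gives $\chi(H_q(n,n))=q$, Theorem~\ref{T-alpha}(b) forces every color class of a $q$-coloring to be a maximum independent set of size $q^{n-1}$, and your weighted Parseval identity with its tightness analysis (which is precisely where $q\ge 3$ enters, via $(q-1)^{z(\xi)}<(q-1)^{n-1}$ for $z(\xi)<n-1$ --- a strict inequality that fails for $q=2$) recovers the structure of maximum independent sets and hence the coordinate form of the coloring.
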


In other words, the minimal colorings of $H_q(n,n)$ are exactly the coordinate colorings.
It turns out that this is not the case for the Hamming graphs $H_q(n,n-1)$ so that there are indeed more $q^2$-colorings.
We will show some small examples for $q=2$ later in this section, and in Section~\ref{S-OpenProb} a $9$-coloring of $H_3(4,3)$ is given that is not a
coordinate coloring.

The following notion will be central to our investigation.

\begin{defi}\label{D-TransEdges}
Let $G=(V,E)$ be an undirected simple graph with proper $k$-coloring $K:V\longrightarrow [k]$.
Let $(x,y)\in E$. Then the edge $(x,y)$ is a \emph{transition edge for~$K$} if the map
\[
  K': V\longrightarrow[k],\quad v\longmapsto\left\{\begin{array}{ll} K(x),&\text{if }v=y,\\ K(y),&\text{if }v=x,\\ K(v),&\text{ if }v\not\in\{x,y\}
      \end{array}\right.
\]
is a proper $k$-coloring of~$G$.
In other words, $(x,y)$ is a transition edge if we may swap the colors of~$x$ and~$y$ without destroying properness of the coloring.
In this case, we call~$K'$ the coloring obtained by \emph{swapping colors along the edge $(x,y)$}.
We define
\[
    T(K):=\{(x,y)\mid (x,y)\text{ is a transition edge for }K\}\subseteq E
\]
as the \emph{transition space} of the coloring~$K$.
For any $x\in V$ we also define
\[
   T_x(K):=\{y\in V\mid (x,y)\in T(K)\}
\]
as the set of all neighbors of~$x$ that are adjacent to~$x$ via a transition edge.
\end{defi}
We have the following simple characterization of transition edges.
\begin{rem}\label{R-TransEdges}
Let~$G$ and~$K$ be as in Definition~\ref{D-TransEdges}. Let $(x,y)\in E$. Then
$(x,y)$ is a transition edge for~$K$ iff~$x$ is the only neighbor of~$y$ with color $K(x)$ and~$y$ is the
only neighbor of~$x$ with color $K(y)$.
In other words,
\[
  (x,y)\in T(K)\Longleftrightarrow K^{-1}(K(x))\cap\cN(y)=\{x\}\text{ and }K^{-1}(K(y))\cap\cN(x)=\{y\}.
\]
\end{rem}

The number of transition edges of a given coloring may be regarded as a measure of robustness of that coloring
in the sense that  the coloring tolerates swaps of colors along such edges.
We make this precise with the following notion.

\begin{defi}\label{D-Robust}
Let $G=(V,E)$ be a simple undirected graph.
\begin{alphalist}
\item We define the \emph{robustness} of a $k$-coloring~$K:G\longrightarrow[k]$ as
      $\rb(K)=\frac{|T(K)|}{|E|}$.
\item The \emph{$k$-coloring robustness of~$G$} is defined as
      \[
          \rb_k(G)=\max\{\rb(K)\mid K\text{ is a $k$-coloring of }G\}.
      \]
\end{alphalist}
\end{defi}
Clearly, if $k\geq|V(G)|$ then $\rb_k(G)=1$ because we may color each vertex differently.

Now we are ready to consider some small examples.
\begin{exa}\label{E-ColH232}
Consider the graph $G:=H_2(3,2)$ with vertex set~$\Z_2^3$. 
      \begin{center}
      \resizebox{180pt}{130pt}{\begin{tikzpicture}[scale=0.5]
      \tikzstyle{every node}=[draw,circle,fill=white,minimum size=10pt, inner sep=8pt]
           \draw (0,2) node (000) [label=center:000] {} ;
           \draw (0,0) node (101) [label=center:101] {} ;
           \draw (-3,0) node (011) [label=center:011] {} ;
           \draw (3,0) node (110) [label=center:110] {} ;
           \draw (0,-2) node (010) [label=center:010] {} ;
           \draw (3,-2) node (001) [label=center:001] {} ;
           \draw (-3,-2) node (100) [label=center:100] {} ; 		
           \draw (0,-4) node (111) [label=center:111] {} ;

           \draw (000) to (101);
           \draw (011) to (101);
           \draw (110) to (101);
           \draw (010) to (101);
           \draw (000) to (110);
           \draw (000) to (011);
           \draw (011) to (100);
           \draw (100) to (111);
           \draw (100) to (010);
           \draw (111) to (010);
           \draw (111) to (001);
           \draw (001) to (110);
           \draw (010) to (001);
           \draw (011) to [out=335,in=205](110);
           \draw (100) to [out=335,in=205](001);
           \draw(111) to [out=180, in=180,distance=6cm] (000);
           \end{tikzpicture}}
           \\
           {\footnotesize Figure~1: $H_2(3,2)$}
           \end{center}
      Using, for instance, the $4$-coloring ideal from~\eqref{e-ColIdeal} one obtains the following~$9$ colorings of~$G$.
      These are all colorings up to isomorphism.

      \begin{align*}
        \begin{array}{|c||c|c|c|c|}
        \hline
         K_1&000,\ 001&100,\ 101&010,\ 011&110,\ 111\\ \hline
         K_2&000,\ 010&100,\ 110&001,\ 011&101,\ 111\\ \hline
         K_3&000,\ 100&010,\ 110&001,\ 101&011,\ 111\\ \hline
         K_4&000,\ 100&010,\ 110&001,\ 011&101,\ 111\\ \hline
         K_5&000,\ 100&010,\ 011&001,\ 101&110,\ 111\\ \hline
         K_6&000,\ 010&100,\ 110&001,\ 101&011,\ 111\\ \hline
         K_7&000,\ 010&100,\ 101&001,\ 011&110,\ 111\\ \hline
         K_8&000,\ 001&100,\ 110&010,\ 011&101,\ 111\\ \hline
         K_9&000,\ 001&100,\ 101&010,\ 110&011,\ 111\\ \hline
       \end{array}\\
       \text{\footnotesize Figure~2: All colorings of $H_2(3,2)$\hspace*{1cm}}
      \end{align*}
      The first three rows represent the $(1,2)$-, $(1,3)$-, $(2,3)$-coordinate colorings, respectively.
      The last 6 colorings are obtained from the coordinate colorings by swapping colors along a transition edge.
      For example,~$K_4$ can be obtained from~$K_3$ by swapping the colors of $011$ and $101$ or from~$K_2$ be swapping
      the colors of $010$ and $100$.
      Similarly,~$K_5$ can be obtained from~$K_3$ by swapping the colors of $011$ and $110$ or from~$K_1$ by
      swapping the color of $100$ and $001$.
      In the same way, each of~$K_6,\ldots,K_9$ can be obtained from two different coordinate colorings by swapping colors along a certain transition edge.
      All of this shows that each minimal coloring of~$G$ is either a coordinate coloring or just one swap away from a coordinate coloring.
      In particular, each minimal coloring is even.

      The following two figures display the graph with two different colorings and their transition edges shown in zigzag.
      The left one shows the $(2,3)$-coordinate coloring (coloring~$K_3$) and the right one shows coloring~$K_6$.
      Note that for the $(2,3)$-coordinate coloring we have~$8$ transition edges, and they tile the graph in two $4$-cycles
      (in the sense of Def.~\ref{D-Tiling} later in this paper).
      In contrast, for coloring~$K_6$ we only have~$4$ transition edges.
      Investigating all~$9$ minimal colorings we obtain
      \[
         \rb(K_i)=\left\{\begin{array}{ll}\frac{1}{2},&\text{for }i=1,2,3,\\[.7ex] \frac{1}{4},&\text{for }i=4,\ldots,9.\end{array}\right.
      \]
      \begin{center}
      \resizebox{400pt}{140pt}{\begin{tikzpicture}[scale=0.6]
    \tikzstyle{every node}=[draw,circle,fill=white,minimum size=10pt, inner sep=8pt]

\draw[dashed] (0,2) node (000) [label=center:000] {} ;
\draw[very thick] (0,0) node (101) [label=center:101] {} ;
\draw (-3,0) node (011) [label=center:011] {} ;
\draw[dotted, thick] (3,0) node (110) [label=center:110] {} ;
\draw[dotted, thick] (0,-2) node (010) [label=center:010] {} ;
\draw[very thick] (3,-2) node (001) [label=center:001] {} ;
\draw[dashed] (-3,-2) node (100) [label=center:100] {} ; 		
\draw (0,-4) node (111) [label=center:111] {} ;

\draw[line width=1.8pt] decorate [decoration={zigzag}] {(000) -- (101)};
\draw[line width=1.8pt] decorate [decoration={zigzag}] { (011) to (101)};
\draw (110) to (101);
\draw (010) to (101);
\draw[line width=1.8pt] decorate [decoration={zigzag}] { (000) to (110)};
\draw (000) to (011);
\draw (011) to (100);
\draw (100) to (111);
\draw[line width=1.8pt] decorate [decoration={zigzag}] { (100) to (010)};
\draw[line width=1.8pt] decorate [decoration={zigzag}] { (111) to (010)};
\draw[line width=1.8pt] decorate [decoration={zigzag}] { (111) to (001)};
\draw (001) to (110);
\draw (010) to (001);
\draw[line width=1.8pt] decorate [decoration={zigzag}] { (011) to [out=335,in=205](110)};
\draw[line width=1.8pt] decorate [decoration={zigzag}] {(100) to [out=335,in=205] (001)};
\draw(111) to [out=180, in=180,distance=6cm] (000);

\tikzstyle{every node}=[]
\end{tikzpicture}
  \begin{tikzpicture}[scale=0.6]
    \tikzstyle{every node}=[draw,circle,fill=white,minimum size=10pt, inner sep=8pt]

\draw[dashed] (0,2) node (000) [label=center:000] {} ;
\draw[very thick] (0,0) node (101) [label=center:101] {} ;
\draw[dotted,thick] (-3,0) node (011) [label=center:011] {} ;
\draw (3,0) node (110) [label=center:110] {} ;
\draw[dashed] (0,-2) node (010) [label=center:010] {} ;
\draw[very thick] (3,-2) node (001) [label=center:001] {} ;
\draw (-3,-2) node (100) [label=center:100] {} ; 		
\draw[dotted,thick] (0,-4) node (111) [label=center:111] {} ;

\draw (000) to (101);
\draw[line width=1.8pt] decorate [decoration={zigzag}] { (011) to (101)};
\draw (110) to (101);
\draw (010) to (101);
\draw[line width=1.8pt] decorate [decoration={zigzag}] { (000) to (110)};
\draw (000) to (011);
\draw (011) to (100);
\draw (100) to (111);
\draw[line width=1.8pt] decorate [decoration={zigzag}] { (100) to (010)};
\draw (111) to (010);
\draw[line width=1.8pt] decorate [decoration={zigzag}] { (111) to (001)};
\draw (001) to (110);
\draw (010) to (001);
\draw (011) to [out=335,in=205](110);
\draw (100) to [out=335,in=205](001);
\draw(111) to [out=180, in=180,distance=6cm] (000);

\tikzstyle{every node}=[]
\end{tikzpicture}}
\\
{\footnotesize Fig.~3: $(2,3)$-coordinate coloring with transition edges\hspace*{1cm}Fig.~4: Coloring~$K_6$ with transition edges}
\end{center}
\end{exa}

\begin{exa}\label{E-ColH243}
Consider now the Hamming graph $G=H_2(4,3)$, shown in Figure~5.
In this case one obtains an abundance of $4$-colorings.
We illustrate just a few particular phenomena.
In addition to the 6 coordinate colorings we have for instance the colorings
\begin{align*}
   \begin{array}{|c||p{6.5em}|p{6.5em}|p{6.5em}|p{6.5em}|}
   \hline
   K_1&0000,1000,&0010,1010,&0001,1001,&1110,1101,1011,\\
    &0100,1100 &0110      &0101,0011 &0111,1111\\
    \hline
   K_2&0001, 0010,&1110, 1101,&1111, 1100&0000, 0011,\\
       &0100, 1000 &1011, 0111& 0110, 1010 &0101, 1001\\
   \hline
   \end{array}
\end{align*}
While the first one is an uneven coloring, the second one is even.
Moreover,~$K_2$ is more than one swap away from a coordinate coloring
(of course, the uneven coloring~$K_1$ cannot be obtained by any number of swaps from a coordinate coloring).
Furthermore, one can check straightforwardly that coloring~$K_1$ has~$3$ transition edges,
and coloring~$K_2$ has~$8$ transition edges.
In addition, for~$K_2$ one may move, for instance, $0000$ into the first color class and/or
$1111$ into the second color class to obtain an uneven proper $4$-coloring.
Similarly, moving $0000$ to the third color class of~$K_1$ leads to yet another uneven $4$-coloring.

\begin{center}
    \includegraphics[height=8.5cm]{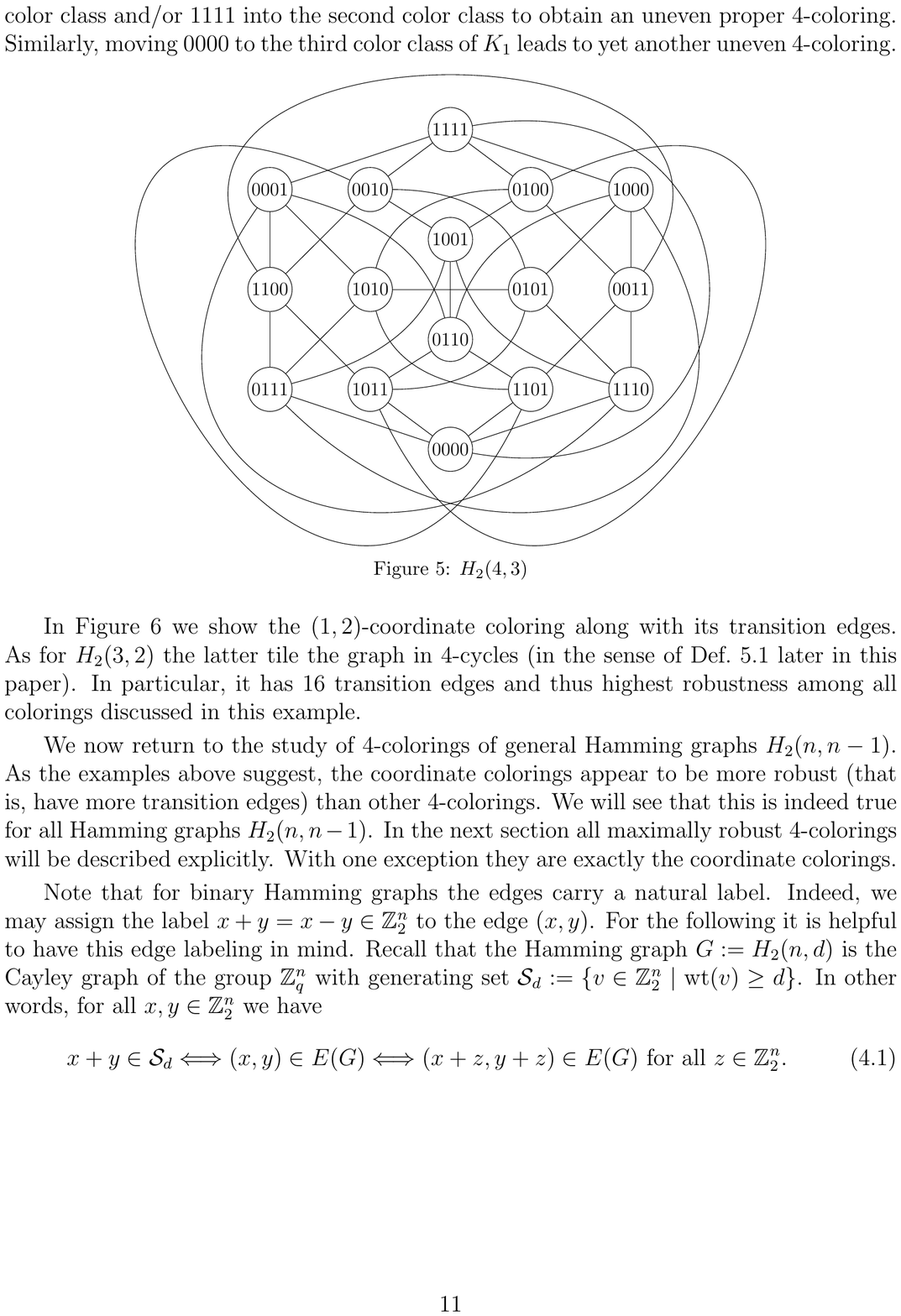}
    \\
    {\footnotesize Figure 5: $H_2(4,3)$}
\end{center}

In Figure~6 we show the $(1,2)$-coordinate coloring along with its transition edges.
As for $H_2(3,2)$ the latter tile the graph in~$4$-cycles (in the sense of Def.~\ref{D-Tiling} later in this paper).
In particular, it has~$16$ transition edges and thus the highest robustness among all colorings discussed in this example.

\begin{center}
    \includegraphics[height=8.5cm]{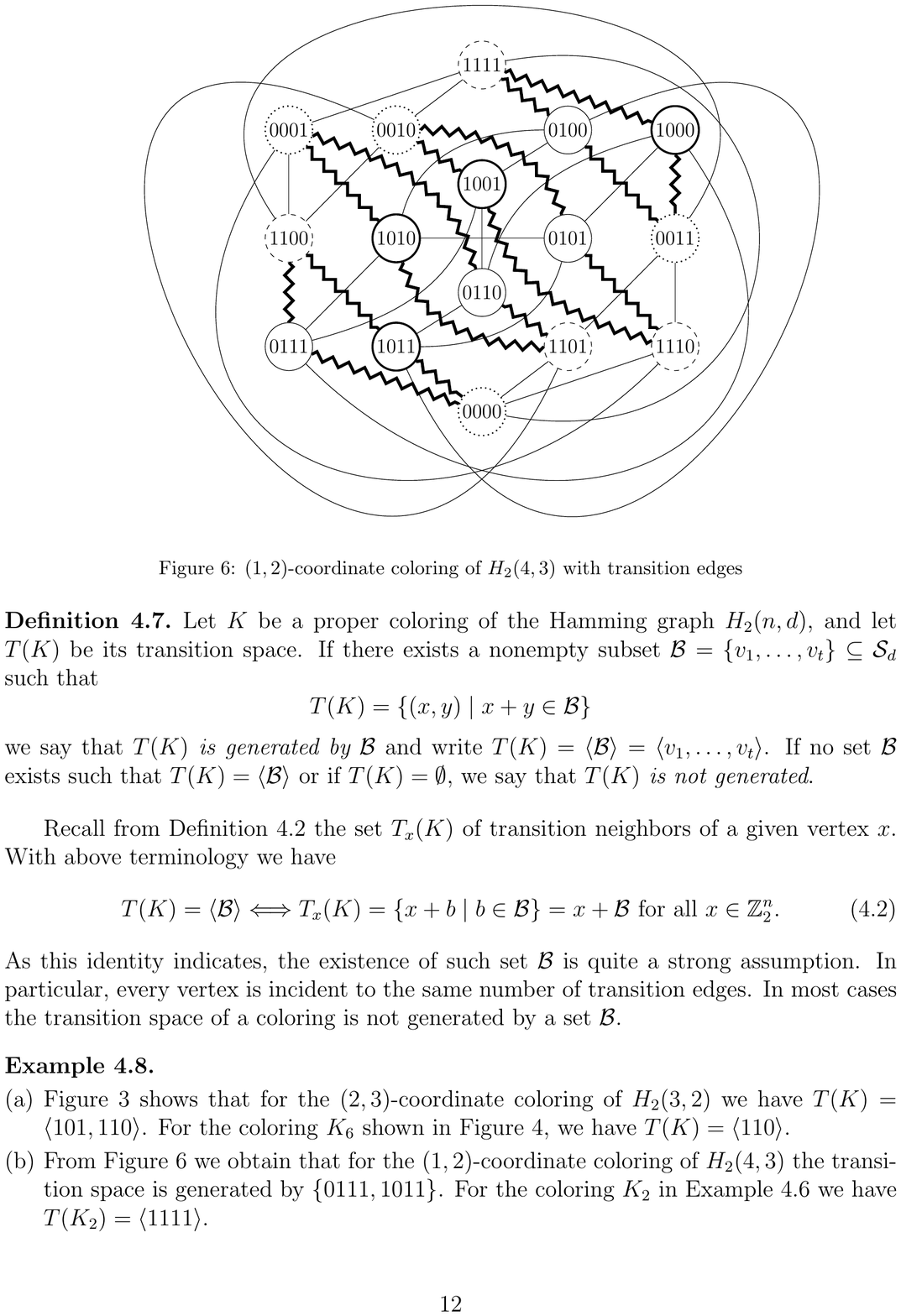}
    \\
    {\footnotesize Figure 6: $(1,2)$-coordinate coloring of $H_2(4,3)$ with transition edges}
\end{center}

\end{exa}

We now return to the study of $4$-colorings of general Hamming graphs $H_2(n,n-1)$.
As the examples above suggest, the coordinate colorings appear to be more robust (that is, have more transition edges)
than other $4$-colorings.
We will see that this is indeed true for all Hamming graphs $H_2(n,n-1)$.
In the next section all maximally robust $4$-colorings will be described explicitly. With one exception they are exactly the coordinate colorings.

Note that for binary Hamming graphs the edges carry a natural label.
Indeed, we may assign the label $x+y=x-y\in\Z_2^n$ to the edge $(x,y)$.
For the following it is helpful to have this edge labeling in mind.
Recall that the Hamming graph $G:=H_2(n,d)$ is the Cayley graph of the group $\Z_q^n$ with generating set
$\cS_d:=\{v\in\Z_2^n\mid \wt(v)\geq d\}$.
In other words, for all $x,y\in\Z_2^n$ we have
\begin{equation}\label{e-additivity}
  x+y\in\cS_d\Longleftrightarrow(x,y)\in E(G) \Longleftrightarrow (x+z,y+z)\in E(G) \text{ for all }z\in\Z_2^n.
\end{equation}

\begin{defi}\label{D-TransGen}
Let~$K$ be a proper coloring of the Hamming graph $H_2(n,d)$, and let $T(K)$ be its transition space.
If there exists a nonempty subset $\cB=\{v_1,\ldots,v_t\}\subseteq\cS_d$ such that
\[
   T(K)=\{(x,y)\mid x+y\in\cB\}
\]
we say that \emph{$T(K)$ is generated by~$\cB$} and write $T(K)=\ideal{\cB}=\ideal{v_1,\ldots,v_t}$.
If no set~$\cB$ exists such that $T(K)=\ideal{\cB}$ or if $T(K)=\emptyset$, we say that \emph{$T(K)$ is not generated}.
\end{defi}
Recall from Definition~\ref{D-TransEdges} the set $T_x(K)$ of transition neighbors of a given vertex~$x$.
With above terminology we have
\begin{equation}\label{e-transx}
    T(K)=\ideal{\cB} \Longleftrightarrow T_x(K)=\{x+b\mid b\in\cB\}=x+\cB \text{ for all }x\in\Z_2^n.
\end{equation}
As this identity indicates, the existence of such a set~$\cB$ is quite a strong assumption.
In particular, every vertex is incident to the same number of transition edges.
In most cases the transition space of a coloring is not generated by a set~$\cB$.

\begin{exa}\label{E-TransGen}\
\begin{alphalist}
\item Figure~3 shows that for the $(2,3)$-coordinate coloring of $H_2(3,2)$ we have $T(K)=\ideal{101,110}$.
      For the coloring~$K_6$ shown in Figure~4, we have $T(K)=\ideal{110}$.
\item From Figure~6 we obtain that for the $(1,2)$-coordinate coloring of $H_2(4,3)$ the transition space is generated by
      $\{0111,1011\}$.
      For the coloring~$K_2$ in Example~\ref{E-ColH243} we have $T(K_2)=\ideal{1111}$.
\item Here is an even coloring of $H_2(5,4)$ for which the transition space is nonempty and not generated.
      Take the $(1,2)$-coordinate coloring and swap the colors of vertices $00000$ and $10111$. This leads to an even
      $4$-coloring, say~$K$ (see also Prop.~\ref{P-TSCoord} below).
      Then one checks straightforwardly that $T_{10111}(K)=\{00000\}$ and $T_{00001}(K)=\{10110,01110\}$.
      This shows that $T(K)$ is not generated.
 \end{alphalist}
\end{exa}

It turns out that the transition space of a coordinate coloring of $H_2(n,n-1)$ is generated and the generating set
can be described explicitly.
To show this we define the following vectors of~$\Z_2^n$. For $i=1,\ldots,n$ let
\[
   e_i:=(0,\ldots,0,1,0,\ldots,0)\ \text{ and }\ f_i:=e_i+\mbone=(1,\ldots,1,0,1,\ldots,1),
\]
where~$1$ resp.~$0$ is at the $i$-th position.
Then for any vertex $x\in\Z_2^n$ its set of neighbors in $H_2(n,n-1)$ is given by
\begin{equation}\label{e-xNeigh}
   \cN(x)=x+\cN(0), \text{ where }\ \cN(0)=\{\mbone,f_1,\ldots,f_n\}=\{v\in\Z_2^n\mid \wt(v)\geq n-1\}.
\end{equation}
Clearly, $\cN(0)=\cS_{n-1}$, with~$\cS_d$ as in~\eqref{e-additivity}.

Before describing the transition spaces of coordinate colorings we first derive an upper bound for the robustness of a
$4$-coloring of $H_2(n,n-1)$.
The case $n=2$ differs from $n\geq 3$.
Indeed, $H_2(2,1)$ is the complete graph $K_4$ and thus each edge is a transition edge.
As a consequence, its robustness is~$1$.
From now on we will only be concerned with the case $n\geq 3$.

\begin{prop}\label{P-TransSpace}
Let $n\geq3$ and $K$ be a proper $4$-coloring of $H_2(n,n-1)$.
Then $|T_x(K)|\leq 2$ for all $x\in\Z_2^n$.
As a consequence, $\rb(K)\leq\frac{2}{n+1}$ with equality if and only if $|T_x(K)|=2$ for all $x\in\Z_2^n$.
\end{prop}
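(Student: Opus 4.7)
The plan is to exploit the fact, from Proposition~\ref{P-basics}(a), that $H_2(n,n-1)$ is regular of degree exactly $n+1$ (here $\sum_{i=0}^{1}\binom{n}{i}(2-1)^{n-i}=n+1$). So at every vertex $x$, its $n+1$ neighbors must be colored using only the three colors distinct from $K(x)$, and this is the constraint that will deliver the bound.

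The first step is the vertex-wise estimate. Fix $x \in \Z_2^n$, let $c_1,c_2,c_3$ denote the three colors different from $K(x)$, and set $a_j:=|\cN(x)\cap K^{-1}(c_j)|$. Then $a_1+a_2+a_3=n+1$. By Remark~\ref{R-TransEdges}, any transition neighbor $y\in T_x(K)$ satisfies $K^{-1}(K(y))\cap\cN(x)=\{y\}$, so $K(y)$ must be a color $c_j$ with $a_j=1$. Consequently
\[
   |T_x(K)|\;\leq\;|\{j\in\{1,2,3\}\mid a_j=1\}|.
\]
The combinatorial heart of the argument is that this cardinality is at most $2$: if all three $a_j$ were equal to $1$, we would get $n+1=a_1+a_2+a_3=3$, contradicting $n\geq 3$. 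Hence $|T_x(K)|\leq 2$, which is the first claim.

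For the second step, I would count transition edges globally. Since the graph is undirected in the paper's convention ($E\subseteq V\times V$ with $(x,y)\in E \Leftrightarrow (y,x)\in E$), summing over vertices double-counts ordered transition edges, giving $|T(K)|=\sum_{x\in\Z_2^n}|T_x(K)|\leq 2\cdot 2^n$. Regularity of degree $n+1$ similarly gives $|E(H_2(n,n-1))|=2^n(n+1)$. Dividing yields
\[
   \rb(K)\;=\;\frac{|T(K)|}{|E|}\;\leq\;\frac{2\cdot 2^n}{2^n(n+1)}\;=\;\frac{2}{n+1},
\]
with equality iff $|T_x(K)|=2$ for every $x\in\Z_2^n$.

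I do not expect a serious obstacle; the whole argument is elementary pigeonhole plus double counting. The only point that requires a little care is that Remark~\ref{R-TransEdges} imposes two symmetric conditions on a transition edge, whereas the bound $|T_x(K)|\leq 2$ uses only the condition governing the colors of neighbors of $x$ — a weaker requirement that nevertheless suffices here because $n\geq 3$ already kills the ``three singleton colors'' scenario.
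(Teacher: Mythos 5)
Your proposal is correct and follows essentially the same route as the paper: a pigeonhole argument at each vertex (the $n+1\geq 4$ neighbors use only three colors, so at most two color classes meet $\cN(x)$ in a single vertex) followed by double counting against $|E(H_2(n,n-1))|$. The only cosmetic difference is that you count $T(K)$ and $E$ as ordered pairs while the paper halves both; the ratio and the equality condition are identical.
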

\begin{proof}
Fix any vertex $x\in\Z_2^n$ and consider its set of neighbors $\cN(x)$, which consists of $n+1\geq4$ vertices; see~\eqref{e-xNeigh}.
They can comprise at most~$3$ colors (those different from $K(x)$), and this means that there exist
two neighbors that share a color.
By Remark~\ref{R-TransEdges} only neighbors that do not share a color with another neighbor of~$x$ can be
in $T_x(K)$.
Since there are at most two colors that are not shared by distinct neighbors, we conclude $|T_x(K)|\leq2$ .

For the second part we simply upper bound the number of transition edges.
Since any edge is shared by two vertices we have immediately $|T(K)|\leq\frac{2\cdot2^n}{2}=2^n$
with equality iff $|T_x(K)|=2$ for all $x\in\Z_2^n$.
Moreover, Proposition~\ref{P-basics}(a) yields $|E(H_2(n,n-1))|=(n+1)2^{n-1}$.
Now the statement follows from Definition~\ref{D-Robust}.
\end{proof}

Now we are ready to confirm what we have observed earlier in Example~\ref{E-ColH243}. The coordinate colorings are colorings with
maximal robustness, that is, maximal number of transition edges.
\begin{prop}\label{P-TSCoord}
Let $n\geq3$ and $K$ be the $(i,j)$-coordinate coloring of $H_2(n,n-1)$. Then $T(K)=\ideal{f_i,f_j}$ and thus $\rb(K)=\frac{2}{n+1}$.
As a consequence, $\rb_4(H_2(n,n-1))=\frac{2}{n+1}$.
\end{prop}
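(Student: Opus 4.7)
My plan is to determine exactly which edges of $H_2(n,n-1)$ are transition edges of the $(i,j)$-coordinate coloring $K(x) = (x_i, x_j)$, using the characterization from Remark~\ref{R-TransEdges} together with the Cayley graph structure from Proposition~\ref{P-basics}(d). Every edge $(x,y)$ of $H_2(n,n-1)$ carries a label $e := x+y$ which lies in $\cN(0) = \{\mbone, f_1, \ldots, f_n\}$, and because $K$ depends only on the two coordinates $i$ and $j$, the question of whether $(x,y)$ is a transition edge depends only on $(e_i, e_j)$, not on $x$ itself.

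I would fix an arbitrary edge $(x,y)$ with label $e$ and count the neighbors of $x$ that share color $K(y)$. A neighbor $x + v$ with $v \in \cN(0)$ satisfies $K(x+v) = K(y) = (x_i + e_i,\, x_j + e_j)$ if and only if $(v_i, v_j) = (e_i, e_j)$. So the problem reduces to tabulating which of the vectors $\mbone, f_1, \ldots, f_n$ agree with $e$ at positions $i$ and $j$.

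The next step is a short case analysis on $(e_i, e_j)$. The value $(0,0)$ is excluded by properness of $K$. In the cases $(1,0)$ and $(0,1)$ we have $e = f_j$ and $e = f_i$ respectively, and in each case exactly one element of $\cN(0)$ matches the restriction --- namely $f_j$ itself, resp.\ $f_i$ itself --- so by Remark~\ref{R-TransEdges} the edge $(x,y)$ is a transition edge. In the case $(1,1)$, the matching vectors are $\mbone$ together with $f_k$ for every $k \notin \{i,j\}$, giving at least two such vectors since $n \geq 3$; hence $y$ is not the unique neighbor of $x$ with color $K(y)$, and $(x,y)$ is not a transition edge. This will establish $T(K) = \ideal{f_i, f_j}$.

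Finally, since $T_x(K) = \{x + f_i,\, x + f_j\}$ has cardinality $2$ for every vertex $x$, the equality clause of Proposition~\ref{P-TransSpace} yields $\rb(K) = \frac{2}{n+1}$. Combined with the upper bound $\rb(K') \leq \frac{2}{n+1}$ from that same proposition applied to any $4$-coloring $K'$ of $H_2(n,n-1)$, we conclude $\rb_4(H_2(n,n-1)) = \frac{2}{n+1}$. No serious obstacle is anticipated; the only subtle point is the hypothesis $n \geq 3$, which is precisely what guarantees that the $(1,1)$ case produces at least two competing neighbors of the same color and thereby rules out labels other than $f_i$ and $f_j$.
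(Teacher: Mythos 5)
Your proposal is correct and follows essentially the same route as the paper: both arguments reduce to Remark~\ref{R-TransEdges} and the observation that among the $n+1$ neighbors of a vertex $x$, the $n-1$ neighbors $x+\mbone$ and $x+f_k$ ($k\notin\{i,j\}$) all share one color while $x+f_i$ and $x+f_j$ are each uniquely colored, followed by an appeal to Proposition~\ref{P-TransSpace} for the robustness values. Your reorganization as a case analysis on the edge label $(e_i,e_j)$ is a cosmetic difference only.
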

\begin{proof}
Without loss of generality let $(i,j)=(1,2)$.
Fix any vertex $x\in\Z_2^n$.
Since $K$ is the $(1,2)$-coordinate coloring, the neighbors $x+\mbone,x+f_3,\ldots,x+f_{n}$ have the same color, say~$A$.
Since $n\geq3$ these are at least two neighbors and therefore none of them can be incident to~$x$ via a transition edge.
Furthermore, $A\neq K(x+f_1)\neq K(x+f_2)\neq A$.
All of this shows that neither neighbor $x+f_1$ and $x+f_2$ shares a color with any other neighbor of~$x$.
Since this is true for all vertices $x\in\Z_2^n$ (and $x=(x+f_i)+f_i$), Remark~\ref{R-TransEdges} yields that $(x,x+f_1)$ and $(x,x+f_2)$ are
the transition edges incident to~$x$.
Now~\eqref{e-transx} and Proposition~\ref{P-TransSpace} conclude the proof.
\end{proof}

\begin{rem}\label{R-H2ndRobust}
The previous result can be generalized to the coordinate colorings of general binary Hamming graphs $H_2(n,d)$.
Precisely, let $K$ be the $(c_1,\ldots,c_{n-d+1})$-coordinate coloring of~$H_2(n,d)$.
Then one straightforwardly shows that
\[
  T(K)=\Big\langle\Big\{\mbone+\sum_{j=1\atop j\neq i}^{n-d+1} e_{c_j}\,\Big|\, 1\leq i\leq n-d+1\Big\}\Big\rangle.
\]
This leads to the robustness $\rb(K)=\frac{n-d+1}{\sum_{i=0}^{n-d}{n \choose i}}$.
It is not clear whether this is the maximum robustness among all $2^{n-d+1}$-colorings of~$G$.
\end{rem}

In the next section we will describe explicitly the maximally robust $4$-colorings of $H_2(n,n-1)$.

\section{Maximally Robust \mbox{$4$}-Colorings of \mbox{$H_2(n,n-1)$}}
A characterization of the maximally robust $4$-colorings of $H_2(n,n-1)$ is derived in terms of their
transition spaces and all these colorings are presented explicitly (up to isomorphism).
In order to formulate these results, we need the following terminology.

\begin{defi}\label{D-Tiling}
Let~$G=(V,E)$ be a finite graph and $M\subseteq E$ be a set of edges in~$G$.
We say that \emph{$M$ tiles the graph in $4$-cycles} if~$M$ is the union of pairwise vertex-disjoint and edge-disjoint
$4$-cycles and every vertex appears on (exactly) one cycle.
\end{defi}
For example, in Figures~3 and~6 the transition edges of the given coloring tile the graph in $4$-cycles.

Now we are ready to present our main results.
The first theorem provides a characterization of maximally robust colorings in terms of the transition spaces.
The second theorem presents for each such transition space the unique coloring associated with it or shows that no
such coloring exists.
The proofs will be presented afterwards.

Recall from the paragraph right before Proposition~\ref{P-TransSpace} that the case $n=2$ is trivial.

\begin{theo}\label{T-MainThm}
Let $n\geq3$ and~$K$ be a $4$-coloring of $H_2(n,n-1)$.
The following are equivalent.
      \begin{romanlist}
      \item There exist $v,w\in\cN(0)$ such that $T(K)=\ideal{v,w}$.
      \item $\rb(K)=\frac{2}{n+1}$.
      \item The transition edges tile the graph in $4$-cycles.
      \end{romanlist}
 If one, hence any of the above is true, then the coloring of a single $4$-cycle uniquely determines the coloring of the entire graph.
\end{theo}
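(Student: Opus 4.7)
The plan is to prove the implications (i) $\Rightarrow$ (iii) $\Rightarrow$ (ii) $\Rightarrow$ (i) and then handle uniqueness separately. The first two are direct. For (i) $\Rightarrow$ (iii), \eqref{e-transx} says the transition neighbors of any $x$ are $x+v$ and $x+w$, and iterating traces the 4-cycle $x,x+v,x+v+w,x+w$; these 4-cycles are precisely the cosets of the order-4 subgroup $\{0,v,w,v+w\}\subseteq\Z_2^n$ and partition $V$ into $2^{n-2}$ vertex-disjoint 4-cycles covering all of $\Z_2^n$. The implication (iii) $\Rightarrow$ (ii) is immediate from the tiling together with Proposition~\ref{P-TransSpace}.

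The principal work is (ii) $\Rightarrow$ (i). Proposition~\ref{P-TransSpace} gives $|T_x(K)|=2$ for all $x$, so the coloring of each $\cN(x)$ has distribution $(1,1,n-1)$: two minority colors are carried by the transition neighbors and the third non-$K(x)$ color $M(x)$ is carried by the remaining $n-1$ neighbors. Write $T_0(K)=\{a,b\}$ and label colors so that $K(0)=1$, $K(a)=2$, $K(b)=3$, $M(0)=4$. I first show $K(a+b)=4$: properness at $a$ and $b$ rules out $K(a+b)\in\{2,3\}$, and Remark~\ref{R-TransEdges} applied to $(0,a)\in T(K)$ rules out $K(a+b)=1$ (else $a$ would have two color-$1$ neighbors, namely $0$ and $a+b$). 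Writing $T_a(K)=\{0,c\}$, the minority color $K(c)\in\{3,4\}$; if $K(c)=4$, then $c=a+b$ by uniqueness of the color-$4$ minority at $a$, closing the 4-cycle through $0$.

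The main obstacle is to exclude the twisted alternative $K(c)=3$, in which $M(a)=4$ makes $a+b$ a non-transition majority neighbor of $a$. I intend to rule this out by tracing the transition cycle through $0$, which in this case reads $\cdots{-}c{-}a{-}0{-}b{-}d{-}\cdots$ with $K(d)\in\{2,4\}$. The constraint $K(x_{i-1})\neq K(x_{i+1})$ at every vertex of any transition cycle (two distinct minority colors) forces a proper $4$-coloring of the squared cycle $C_\ell^2$; length $\ell=5$ is impossible since $C_5^2=K_5$, and length $\ell=4$ would require $c=d$, ruled out by $K(c)=3\notin\{2,4\}\ni K(d)$. For longer lengths I plan to exploit that two distinct elements of $\cN(0)=\{\mbone,f_1,\ldots,f_n\}$ sum to a vector of weight $\leq 2<n-1$, so most pairs of $\cN(0)$-vectors are mutually non-adjacent in $H_2(n,n-1)$; combining this rigidity with the $(1,1,n-1)$ color distribution at successive cycle vertices should produce a contradiction. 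This length-$\geq 6$ case is the main technical hurdle. Once the twisted case is excluded, symmetry gives $T_b(K)=\{0,a+b\}$ and $T_{a+b}(K)=\{a,b\}$, and performing the same analysis with any vertex in place of $0$ yields $T_x(K)=\{x+a,x+b\}$ for every $x$, hence $T(K)=\ideal{a,b}$.

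For the uniqueness claim, assume (i) holds with $T(K)=\ideal{a,b}$ and suppose $K$ is specified on the 4-cycle $\{0,a,a+b,b\}$. The parallelogram analysis above also yields the identity $M(x)=K(x+a+b)$ at every vertex, so by the $(1,1,n-1)$ distribution $K(y)=K(x+a+b)$ for each $y\in\cN(x)\setminus\{x+a,x+b\}$. Applying this at the four vertices of the initial 4-cycle pins down $K$ on $\cN(0)\cup\cN(a)\cup\cN(a+b)\cup\cN(b)$, and iterating through the connected graph $H_2(n,n-1)$ (Proposition~\ref{P-basics}(e)) propagates $K$ uniquely to all of $\Z_2^n$.
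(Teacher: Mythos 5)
Your implications (i)\,$\Rightarrow$\,(iii)\,$\Rightarrow$\,(ii) are fine and essentially match the paper. The problem is in (ii)\,$\Rightarrow$\,(i), at exactly the point you flag as ``the main technical hurdle'': excluding transition cycles of length $\geq 6$. This is not a routine detail you can defer --- it is the heart of the theorem, and your proposed tools do not suffice. The observation that a transition cycle of length $\ell$ induces a proper $4$-coloring of $C_\ell^2$ kills $\ell=5$ but nothing longer: $C_\ell^2$ is properly $4$-colorable whenever $4\mid\ell$ (repeat the pattern $1,2,3,4$), so for $\ell=8,12,\dots$ no contradiction arises from that constraint alone. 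The auxiliary remark that two distinct elements of $\cN(0)$ sum to a low-weight vector tells you that consecutive edge labels on the cycle are ``generically'' non-adjacent directions, but it does not by itself interact with the $(1,1,n-1)$ color distribution to force anything; you have not exhibited the vertex whose color is overdetermined. The paper closes this gap with a specific construction (its Lemma~\ref{L-4Coloring}(b)): given a transition path $x_1,\dots,x_5$, one first shows $x_1,\dots,x_4$ receive four distinct colors (via the auxiliary adjacent pair $x_2+y$, $x_3+y$), so $K(x_5)=K(x_1)$; then the single vertex $x_2+x_3+x_4$ is adjacent to both $x_2$ and $x_4$, and Remark~\ref{R-TransEdges} forces its color to be simultaneously $K(x_4)$ and $K(x_2)$ unless it coincides with $x_1$ or $x_5$ --- and in that case it is a second neighbor of $x_4$ (resp.\ $x_2$) carrying the color $K(x_5)=K(x_1)$, contradicting that $(x_4,x_5)$ (resp.\ $(x_1,x_2)$) is a transition edge. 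Without an argument of this kind your case analysis at the vertex $0$ never terminates, and the ``twisted alternative'' $K(c)=3$ is not excluded.

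A secondary gap: even once every transition cycle is known to be a $4$-cycle, your sentence ``performing the same analysis with any vertex in place of $0$ yields $T_x(K)=\{x+a,x+b\}$'' only gives $T_x(K)=\{x+a_x,x+b_x\}$ for some pair $a_x,b_x\in\cN(0)$ depending on $x$; to conclude $T(K)=\ideal{a,b}$ you must show the pair is the same at every vertex. The paper does this by taking $x\sim 0$, using Lemma~\ref{L-4cycle}(b) to pin down the colors of $x, x+a, x+b, x+a+b$ from the cycle at $0$, deducing that $(x,x+a)$ and $(x,x+b)$ are the only candidates for transition edges at $x$, and then propagating along paths using connectivity (Proposition~\ref{P-basics}(e)). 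Your uniqueness paragraph contains the germ of this propagation, but it is invoked only under hypothesis (i), not supplied where it is needed inside (ii)\,$\Rightarrow$\,(i).
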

Recall that if the transition space of a coloring is generated by a set~$\cB$, then
$\cB$ is contained in~$\cN(0)=\{\mbone,f_1,\ldots,f_n\}$.
Therefore the requirement  in i) that the vertices $v,\,w$ be in $\cN(0)$ is not a restriction.

\begin{theo}\label{T-MainThm2}
Let $n\geq3$ and~$K$ be a $4$-coloring of $H_2(n,n-1)$.
\begin{arabiclist}
\item If $T(K)=\ideal{f_i,f_j}$ for some $1\leq i<j\leq n$, then $K$ is the $(i,j)$-coordinate coloring.
\item If $T(K)=\ideal{\mbone,f_j}$ for some~$j$, then~$n$ is even and~$K$ is the coloring with color sets
      \[
         \cA^{\nu}_{\mu}:=\{v\in\Z_2^n\mid \wt(v)\equiv\nu\!\!\!\!\mod2,\ v_j=\mu\}\text{ for }\nu,\,\mu\in\{0,1\}.
      \]
\end{arabiclist}
As a consequence, any $4$-coloring of $H_2(n,n-1)$ with maximal robustness is even.
\end{theo}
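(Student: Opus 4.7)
The plan is to exploit the generation $T(K)=\ideal{v,w}$ to control the colors on each neighborhood $\cN(x)$, and then translate the resulting equalities into a symmetry group under which $K$ is invariant.

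In both cases, write $\cB=\{v,w\}\subseteq\cN(0)$, so that by~\eqref{e-transx} every $x\in\Z_2^n$ has transition neighbors exactly $x+v$ and $x+w$. By Remark~\ref{R-TransEdges}, $x+v$ is the unique neighbor of $x$ with color $K(x+v)$, and analogously for $x+w$. Hence the remaining $n-1$ neighbors $x+u$ with $u\in\cN(0)\setminus\cB$ may not carry any of the colors $K(x),\,K(x+v),\,K(x+w)$. Since only the fourth color is left, they all share it. Substituting $y=x+u_1$ in the ensuing equality $K(x+u_1)=K(x+u_2)$ produces the key identity
\[
  K(y)=K(y+u_1+u_2)\quad\text{for all }y\in\Z_2^n\text{ and all }u_1,u_2\in\cN(0)\setminus\cB.
\]

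For Part~1, with $\cB=\{f_i,f_j\}$, the complement is $\{\mbone\}\cup\{f_k\mid k\notin\{i,j\}\}$. Since $\mbone+f_k=e_k$ and $f_k+f_l=e_k+e_l$, the identity above forces $K$ to be invariant under translation by $e_k$ for every $k\notin\{i,j\}$. Thus $K(x)$ depends only on $(x_i,x_j)$; write $K(x)=F(x_i,x_j)$. Since $\chi(H_2(n,n-1))=4$ by Theorem~\ref{T-H2nnm1}, the coloring $K$ uses all four colors, so $F:\Z_2^2\to\{1,\ldots,4\}$ is surjective and therefore bijective. Hence $K$ is the $(i,j)$-coordinate coloring up to relabeling of colors.

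For Part~2, with $\cB=\{\mbone,f_j\}$, the complement is $\{f_k\mid k\neq j\}$, and the identity becomes $K(y)=K(y+e_k+e_l)$ for all $k\neq l$ with $k,l\neq j$. These difference vectors span the subspace $V_j:=\{u\in\Z_2^n\mid u_j=0,\,\wt(u)\text{ even}\}$ of dimension $n-2$, so $K$ is constant on each of the four cosets of $V_j$; these cosets are precisely the sets $\cA^\nu_\mu$. As in Part~1, $K$ uses all four colors, hence induces a bijection between cosets and colors. To see that $n$ must be even, observe that $0\in\cA^0_0$ and $f_j\in\cA^{(n-1)\bmod 2}_0$ are adjacent, so properness forces $(n-1)\bmod 2\neq 0$, i.e., $n$ is even. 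For completeness, when $n$ is even, any two $u,v$ in the same coset satisfy $(u+v)_j=0$ and $\wt(u+v)$ even, so $\wt(u+v)\leq n-2<n-1$ and $u\not\sim v$; the coloring is indeed proper.

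The concluding claim is now immediate: the coordinate color classes have cardinality $2^{n-2}$, and each $\cA^\nu_\mu$ also has cardinality $2^{n-2}$ (fixing $x_j$ halves $\Z_2^n$, and fixing the parity of $\wt(x)$ halves it again). Thus every maximally robust $4$-coloring is even. The main technical point is in Part~2: identifying the correct invariance subspace (it is $V_j$, not the full even-weight subspace), and then extracting from the adjacent pair $(0,f_j)$ the parity condition that forces $n$ to be even.
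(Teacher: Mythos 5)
Your proof is correct, and it takes a genuinely different route from the paper's. The paper deduces Part~1 from the uniqueness statement in Theorem~\ref{T-MainThm} combined with Proposition~\ref{P-TSCoord}, and for Part~2 (odd~$n$) it runs a fairly long induction over weight classes $B^{\nu}_{\mu}(i)$, propagating colors along shifted $4$-cycles until the coincidence $B^0_0(t)=B^0_0(n-1-t)$ yields a contradiction. You instead extract from Remark~\ref{R-TransEdges} the single observation that, for each vertex~$x$, the $n-1$ non-transition neighbors must all carry the fourth color (here you implicitly use that $K(x+v)\neq K(x+w)$, which does follow at once from the uniqueness clause in Remark~\ref{R-TransEdges}; it is worth a half-sentence). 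This gives the invariance $K(y)=K(y+u_1+u_2)$ for $u_1,u_2\in\cN(0)\setminus\cB$, identifies the correct invariance subspace in each case ($\{u\mid u_i=u_j=0\}$, resp.\ the even-weight subspace $V_j$ supported off coordinate~$j$), and reduces both parts to matching four cosets with four colors. The parity obstruction for odd~$n$ then falls out of the single adjacent pair $0\sim f_j$ lying in the same coset of~$V_j$ --- a substantial shortcut over the paper's induction, and your argument is moreover independent of Theorem~\ref{T-MainThm}. The only caveat is cosmetic: what you obtain is the $(i,j)$-coordinate coloring (resp.\ the $\cA^{\nu}_{\mu}$-coloring) up to a relabeling of the four colors, which is also how the paper's statement must be read.
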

Note that Part~2) tells us that if~$n$ is odd, then sets of the form $\{\mbone,f_j\}$ do not generate the
transition space of a $4$-coloring.

We prove the results in several steps and first need some preliminary results.
Before doing so, let us point out that for any $4$-coloring the graph $H_2(n,n-1)$ does not have cycles consisting
of transition edges of length~$3$.
For $n\geq4$ this is obvious by Proposition~\ref{P-cycles}(c), while for $n=3$ this can be verified via the Figures~3 and~4,
which cover all $4$-colorings of $H_2(3,2)$ up to graph isomorphism.
The following lemmas will lead to information about the existence, colors, and interrelation of $4$-cycles consisting of transition edges.

\begin{lemma}\label{L-H2n4}
Consider the graph $G:=H_2(n,n-1)$, where $n\geq3$. Let $x_1,x_2,x_3,x_4\in V(G)$ be distinct vertices such that
\begin{equation}\label{e-cycleai}
   (x_1,x_2),\, (x_2,x_3),\,(x_3,x_4),\,(x_4,x_1)\in E(G),
\end{equation}
that is, $x_1,\ldots,x_4$ are on a $4$-cycle. Then $x_1+x_2=x_3+x_4$ and thus $x_1+x_4=x_2+x_3$.
\end{lemma}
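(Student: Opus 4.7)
The plan is to exploit the Cayley graph structure of $G$ together with the explicit description $\cN(0)=\{\mbone,f_1,\ldots,f_n\}$ from~\eqref{e-xNeigh}. First I would label the four edges of the cycle by setting
\[
  a:=x_1+x_2,\ b:=x_2+x_3,\ c:=x_3+x_4,\ d:=x_4+x_1,
\]
all computed in $\Z_2^n$. By~\eqref{e-additivity} each of $a,b,c,d$ lies in $\cN(0)$. The goal $x_1+x_2=x_3+x_4$ is simply $a=c$, which by a telescoping argument in characteristic~$2$ is equivalent to $b=d$ since
\[
  a+b+c+d=(x_1+x_2)+(x_2+x_3)+(x_3+x_4)+(x_4+x_1)=0.
\]

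Next I would exclude the ``degenerate'' coincidences using distinctness of $x_1,\ldots,x_4$: if $a=b$ then $x_1=x_3$, if $b=c$ then $x_2=x_4$, if $c=d$ then $x_1=x_3$, and if $d=a$ then $x_2=x_4$. Hence $a\neq b$, $b\neq c$, $c\neq d$, $d\neq a$. Combined with $a+b+c+d=0$, the only remaining possibility compatible with $a\neq c$ (which I wish to rule out) is that $a,b,c,d$ are four pairwise distinct elements of $\cN(0)$.

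The core of the argument, and the only step requiring a small calculation, is showing that four distinct elements of $\cN(0)$ cannot sum to zero. Writing $f_i=\mbone+e_i$, any four pairwise distinct elements of $\cN(0)=\{\mbone,f_1,\ldots,f_n\}$ fall into one of two cases. If $\mbone$ is among them, the other three are $f_i,f_j,f_k$ with pairwise distinct indices, and
\[
  \mbone+f_i+f_j+f_k=\mbone+3\mbone+e_i+e_j+e_k=e_i+e_j+e_k\neq 0
\]
by linear independence of the standard basis. If $\mbone$ is not among them, the four are $f_{i_1},f_{i_2},f_{i_3},f_{i_4}$ with distinct indices and their sum is $4\mbone+e_{i_1}+e_{i_2}+e_{i_3}+e_{i_4}=e_{i_1}+e_{i_2}+e_{i_3}+e_{i_4}\neq 0$, again by linear independence. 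I expect this basis argument to be the only nontrivial obstacle; the rest is bookkeeping.

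With that contradiction in hand, at least one of the four remaining coincidences $a=c$ or $b=d$ must hold, and as noted above either implies the other in $\Z_2^n$. This yields $x_1+x_2=x_3+x_4$, and adding $x_2+x_4$ on both sides gives the second identity $x_1+x_4=x_2+x_3$, completing the proof.
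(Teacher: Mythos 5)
Your proof is correct and follows essentially the same route as the paper: label the edges by their differences in $\cN(0)=\{\mbone,f_1,\ldots,f_n\}$, use distinctness of the vertices to force adjacent labels to differ, and then rule out the bad configuration by a computation with $f_i=\mbone+e_i$. The only cosmetic difference is that the paper phrases the final step as ``the sum of three distinct elements of $\cN(0)$ has weight at most $n-2$, yet $x_1+x_4\in\cN(0)$ has weight at least $n-1$,'' whereas you equivalently show that four pairwise distinct elements of $\cN(0)$ cannot sum to zero.
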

\begin{proof}
As all vectors are in $\Z_2^n$ it suffices to show $x_1+x_2=x_3+x_4$.
Since $x_1,\ldots,x_4$ are distinct, we have
\[
  x_1+x_2\neq x_2+x_3\neq x_3+x_4.
\]
By~\eqref{e-cycleai} and~\eqref{e-additivity} all these vectors are in $\cN(0)=\{\mbone,f_1,\ldots,f_n\}$.
Using that $x_1+x_4=(x_1+x_2)+(x_2+x_3)+(x_3+x_4)$, we have $\wt\big((x_1+x_2)+(x_2+x_3)+(x_3+x_4)\big)=\wt(x_1+x_4)\geq n-1$.
But since the sum of any three distinct vectors from $\cN(0)$ has weight at most~$n-2$, we conclude
$x_1+x_2=x_3+x_4$.
\end{proof}

\begin{lemma}\label{L-4Coloring}
Let $n\geq3$ and let~$K$ be a $4$-coloring of $H_2(n,n-1)$.
\begin{alphalist}
\item Suppose there exists a path of length~$3$ consisting of transition edges and passing through the
      vertices $x_1,x_2,x_3,x_4$.
      Then the colors of~$x_1,x_2,x_3,x_4$ are distinct.
\item There exists no simple path in~$H_2(n,n-1)$ of length greater than~$3$ consisting of transition edges.
\end{alphalist}
\end{lemma}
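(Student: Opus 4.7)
The plan is to exploit the uniqueness condition from Remark~\ref{R-TransEdges}: if $(x,y)\in T(K)$, then $x$ is the only neighbor of $y$ of color $K(x)$, and symmetrically for $y$. Applied at the interior vertices of a transition path, this uniqueness pins down the colors of all the remaining neighbors of those vertices, and that rigidity is what ultimately collapses long transition paths.

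For (a), I would first collect the easy distinctness statements. The transition edge $(x_2,x_3)$ gives $K^{-1}(K(x_3))\cap\cN(x_2)=\{x_3\}$, so since $x_1\in\cN(x_2)\setminus\{x_3\}$, one has $K(x_1)\neq K(x_3)$; symmetrically $K(x_4)\neq K(x_2)$. Combined with properness, this places $\{K(x_1),K(x_4)\}$ inside $\{A,D\}$, the two colors not used at $x_2$ and $x_3$. The nontrivial part is to show $K(x_1)\neq K(x_4)$. Assuming the contrary, label $K(x_1)=K(x_4)=A$, $K(x_2)=B$, $K(x_3)=C$. The two transitions incident to $x_2$ force every neighbor of $x_2$ outside $\{x_1,x_3\}$ to have color $D$, and (using $K(x_4)=A$) the two transitions incident to $x_3$ force the same conclusion for every neighbor of $x_3$ outside $\{x_2,x_4\}$. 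Writing $v_i:=x_i+x_{i+1}\in\cN(0)$, I would pick $\alpha\in\cN(0)\setminus\{v_1,v_2,v_3\}$; this set is nonempty since $|\cN(0)|=n+1\geq 4$. Then $y:=x_2+\alpha$ is a neighbor of $x_2$ with $y\neq x_1,x_3$ (because $\alpha\neq v_1,v_2$), so $K(y)=D$; analogously $z:=x_3+\alpha$ has $K(z)=D$. But $y+z=v_2\in\cN(0)$, hence $y\sim z$, contradicting properness.

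For (b), suppose $x_1,\ldots,x_5$ is a simple transition path of length~$4$. Applying (a) to $x_1,\ldots,x_4$ and to $x_2,\ldots,x_5$ exhausts all four colors on each quadruple, so $K(x_5)=K(x_1)$; label the colors along the path $A,B,C,D,A$. The uniqueness analysis at $x_2$ and $x_4$ yields two rules: every neighbor of $x_2$ outside $\{x_1,x_3\}$ has color $D$, and every neighbor of $x_4$ outside $\{x_3,x_5\}$ has color $B$. The key auxiliary vertex is $y':=x_2+x_3+x_4$. Since $y'+x_2=v_3\in\cN(0)$ and $y'+x_4=v_2\in\cN(0)$, both $y'\sim x_2$ and $y'\sim x_4$ hold, and $y'\neq x_3$ follows from $x_2\neq x_4$. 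One also checks that $y'=x_1\Leftrightarrow v_1=v_3$ and $y'=x_5\Leftrightarrow v_2=v_4$. A four-way case split finishes the argument: if $y'\notin\{x_1,x_5\}$, the two rules force $K(y')=D$ and $K(y')=B$ simultaneously, a contradiction; if $y'=x_1$ but $y'\neq x_5$, then $K(y')=A$, while the $x_4$-rule still requires $K(y')=B$, contradiction; the case $y'=x_5$, $y'\neq x_1$ is symmetric using the $x_2$-rule; and $y'=x_1=x_5$ directly contradicts simplicity. Any simple transition path of length greater than~$3$ would contain a length-$4$ sub-path, so no such longer path exists either.

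The main obstacle I expect is handling the degeneracies of the path: some of the $v_i$ may coincide (for instance $v_1=v_3$ means $x_1,\ldots,x_4$ close into a 4-cycle on only four vertices rather than forming an induced path), and nonconsecutive path vertices may happen to be adjacent. The case split for $y'$ in (b) is precisely what absorbs these possibilities, showing that in each degenerate case the auxiliary vertex either collapses to a labeled path vertex whose color is already fixed (producing a color contradiction via one of the two rules) or forces $x_1=x_5$.
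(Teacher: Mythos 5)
Your proof is correct and follows essentially the same route as the paper: part (a) uses the auxiliary adjacent pair $x_2+\alpha$, $x_3+\alpha$ with $\alpha\in\cN(0)\setminus\{v_1,v_2,v_3\}$, and part (b) uses the vertex $x_2+x_3+x_4$ adjacent to both $x_2$ and $x_4$, exactly as in the paper's argument. The only (welcome) difference is that your direct verification of the non-coincidences $y\neq x_1,x_3$ and $z\neq x_2,x_4$ lets the argument cover $n=3$ uniformly, whereas the paper invokes the girth-$4$ property (valid only for $n\geq4$) and disposes of $n=3$ by checking the nine colorings of $H_2(3,2)$ separately.
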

\begin{proof}
Both claims are easily verified if $n=3$ as there are only~$9$ colorings to check; see Figure~2.
Indeed, Figures~3 and~4 show (up to graph isomorphism) the only possible scenarios of transition edges.
Thus we may assume $n\geq4$.
\\
(a)  Since $(x_1,x_2),(x_2,x_3),(x_3,x_4)$ are transition edges, the only vertices that may share a color are~$x_1$ and~$x_4$.
Pick a vector $y\in\cN(0)\,\backslash\,\{x_1+x_2,\,x_2+x_3,\,x_3+x_4\}$. This is possible because $n\geq3$.
By~\eqref{e-additivity} the vertices $x_2+y$ and $x_3+y$ are adjacent.
Since the girth is~$4$ thanks to Proposition~\ref{P-cycles}(c), the choice of~$y$ implies that the vertices $x_1,x_2,x_3,x_4,x_2+y,x_3+y$ are distinct.
Thus we have the subgraph
\begin{center}
\begin{tikzpicture}[scale=0.45]
    \draw (0,2) node (x) [label=center:$x_1$] {} ;
    \draw (5,2) node (v) [label=center:$x_2$] {} ;
    \draw (10,2) node (w) [label=center:$x_3$] {} ;
    \draw (15,2) node (z) [label=center:$x_4$] {} ;
    \draw (5,0) node (vy) [label=center:\raisebox{-2ex}{$x_2+y$}] {} ;
    \draw (10,0) node (wy) [label=center:\raisebox{-2ex}{$x_3+y$}] {} ;
    \draw (6,0) node(a1) {};
    \draw (9,0) node (a2) {};

    \draw (x) to (v);
    \draw (v) to (w);
    \draw (w) to (z);
    \draw (vy) to (v);
    \draw (wy) to (w);
    \draw (a1) to (a2);

\end{tikzpicture}
\end{center}
Since $(x_1,x_2)$ and $(x_2,x_3)$ are transition edges, we conclude $K(x_2+y)=K(x_4)$.
In the same way $K(x_3+y)=K(x_1)$.
Now the adjacency of $x_2+y$ and $x_3+y$ implies $K(x_1)\neq K(x_4)$.
All of this shows that $x_1,x_2,x_3,x_4$ assume distinct colors.
\\
(b) Suppose there exists a path consisting of transition edges passing through the vertices
$x_1,\ldots,x_5$, and where $x_i\neq x_j$ for $i\neq j$.
From Part~(a) we know that $x_1,x_2,x_3,x_4$ have distinct colors and the same is true for $x_2,x_3,x_4,x_5$.
Thus we conclude $K(x_1)=K(x_5)$.
Consider the vertex $x_2+x_3+x_4$, which is adjacent to $x_2$ and $x_4$ by~\eqref{e-additivity}. Thus we have
\begin{center}
\begin{tikzpicture}[scale=0.45]
    \draw (0,2) node (x) [label=center:$x_1$] {} ;
    \draw (5,2) node (v) [label=center:$x_2$] {} ;
    \draw (10,2) node (w) [label=center:$x_3$] {} ;
    \draw (15,2) node (z) [label=center:$x_4$] {} ;
    \draw (20,2) node (zz) [label=center:$x_5$] {} ;
    \draw (10,0) node (vy) [label=center:\raisebox{-2ex}{$x_2+x_3+x_4$}] {} ;
    \draw (11,0) node(a1) {};
    \draw (9,0) node (a2) {};

    \draw (x) to (v);
    \draw (v) to (w);
    \draw (w) to (z);
    \draw (z) to (zz);
    \draw (a1) to (z);
    \draw (a2) to (v);
    \end{tikzpicture}
\end{center}
The fact that the upper row consists of transition edges implies that $x_2+x_3+x_4$ must equal $x_1$ or $x_5$ for otherwise there is
no color left for $x_2+x_3+x_4$.
Assume without loss of generality that $x_2+x_3+x_4=x_1$. But then $K(x_5)=K(x_1)=K(x_2+x_3+x_4)$, contradicting the fact
that $(x_4,x_5)$ is a transition edge.
Thus we have shown that no such path exists.
\end{proof}

By the previous lemma there are no cycles of length greater than~$4$ in $H_2(n,n-1)$ consisting of transition edges.
The next lemma provides information about cycles of length~$4$ consisting of transition edges.

\begin{lemma}\label{L-4cycle}
Let $n\geq3$ and let~$K$ be a $4$-coloring of $G:=H_2(n,n-1)$.
Let $x_1,x_2,x_3,x_4\in V(G)$ be distinct vertices such that
\[
   (x_1,x_2),\, (x_2,x_3),\,(x_3,x_4),\,(x_4,x_1)\in T(K),
\]
that is, $x_1,\ldots,x_4$ are on a $4$-cycle whose edges are transition edges of~$K$.
Then
\begin{alphalist}
\item The colors $K(x_1),\ldots,K(x_4)$ are distinct.
\item Let $y\in V(G)\backslash\{x_1,\ldots,x_4\}$ be any other vertex of~$G$. Then~$y$ is adjacent to at most one of the vertices
      $x_1,\ldots,x_4$.
      Furthermore, if~$y$ is adjacent to~$x_i$, then $K(y)=K(\tilde{x_i})$, where $\tilde{x_i}$
      denotes the vertex opposite to~$x_i$ in the $4$-cycle.
\end{alphalist}
\end{lemma}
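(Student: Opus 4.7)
The plan is to reduce part (a) to Lemma~\ref{L-4Coloring}(a) and then derive part (b) from Remark~\ref{R-TransEdges} combined with part (a). For (a), I observe that the three consecutive edges $(x_1,x_2),(x_2,x_3),(x_3,x_4)$ form a path of length three in which every edge is a transition edge, so Lemma~\ref{L-4Coloring}(a) immediately yields that $K(x_1),K(x_2),K(x_3),K(x_4)$ are pairwise distinct. In particular, these four values exhaust the range of~$K$, and this fact will be used repeatedly in (b).

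For part (b), the plan is first to pin down $K(y)$ whenever $y$ is adjacent to one of the $x_i$, and then to show that two simultaneous adjacencies lead to a contradiction. Suppose $y\sim x_1$ (the other cases are symmetric). Since $(x_1,x_2)$ is a transition edge, Remark~\ref{R-TransEdges} forces $x_2$ to be the \emph{unique} neighbor of $x_1$ colored $K(x_2)$, so $K(y)\neq K(x_2)$. Applying the same reasoning to the transition edge $(x_1,x_4)$ gives $K(y)\neq K(x_4)$, and $K(y)\neq K(x_1)$ is automatic. By part~(a) the only remaining color is $K(x_3)=K(\tilde{x_1})$, which proves the color-identity claim.

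To rule out two simultaneous adjacencies, I would split into two cases. If $y$ were adjacent to two opposite cycle vertices, say $x_1$ and $x_3$, the identity just established would give $K(y)=K(x_3)$, contradicting $y\sim x_3$. If instead $y$ were adjacent to two consecutive vertices, say $x_1$ and $x_2$, the identity would simultaneously yield $K(y)=K(\tilde{x_1})=K(x_3)$ and $K(y)=K(\tilde{x_2})=K(x_4)$, contradicting part~(a). The only subtlety is keeping straight which color each transition edge forbids at~$y$, but once part~(a) guarantees that only four colors are in play, this amounts to a purely mechanical check; I do not anticipate any genuine obstacle beyond the bookkeeping.
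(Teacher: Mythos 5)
Your proof is correct and takes essentially the same approach as the paper: part (a) is reduced to Lemma~\ref{L-4Coloring}(a), and part (b) rests on the uniqueness characterization of transition edges in Remark~\ref{R-TransEdges} together with the fact that the four distinct cycle colors exhaust the palette. The only cosmetic difference is that you establish the color identity first and deduce the non-adjacency claims from it, whereas the paper rules out double adjacency directly by the same counting argument and then derives the color identity.
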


\begin{proof}
Consider the given $4$-cycle with its opposite vertices
\begin{equation}\label{e-4cycle}
\text{\begin{minipage}{5cm}
\begin{tikzpicture}[scale=0.45]
    \draw (4,0) node (a3) [label=center:\raisebox{-.7ex}{$x_3$}] {} ;
    \draw (0,2) node (a2) [label=center:$x_2\;$] {} ;
    \draw (8,2) node (a4) [label=center:$\;x_4$] {} ;
    \draw (4,4) node (a1) [label=center:\raisebox{.7ex}{$x_1$}] {} ;
    \draw (a3) to (a2);
    \draw (a1) to (a4);
    \draw (a3) to (a4);
    \draw (a1) to (a2);
\end{tikzpicture}
\end{minipage}
\qquad\begin{minipage}{4cm} $\tilde{x_1}=x_3$\\[.7ex] $\tilde{x_3}=x_1$\\[.7ex] $\tilde{x_2}=x_4$\\[.7ex] $\tilde{x_4}=x_2$\end{minipage}}
\end{equation}
(a) follows from Lemma~\ref{L-4Coloring}(a).
\\
(b) Let~$y\in V(G)\backslash\{x_1,\ldots,x_4\}$.
Assume $x_1\sim y\sim x_2$. Then $K(x_1)\neq K(y)\neq K(x_2)$.
But since $(x_1,x_4)$ and $(x_2,x_3)$ are transition edges, we also have $K(x_4)\neq K(y)\neq K(x_3)$.
This contradicts that~$K$ is a $4$-coloring. In the same way one shows that~$y$ cannot be adjacent to two opposite vertices
of the $4$-cycle.
Assume now $y\sim x_1$. Then, again, because $(x_1,x_2)$ and $(x_1,x_4)$ are transition edges,~$y$ must have color $K(x_3)=K(\tilde{x_1})$.
\end{proof}

Now we are ready to prove the main theorems.

\noindent\emph{Proof of Theorem~\ref{T-MainThm}.}
i)~$\Rightarrow$~ii) follows from Proposition~\ref{P-TransSpace}.
\\
ii)~$\Rightarrow$~iii)
Example~\ref{E-ColH232} tells us that the only colorings of $H_2(3,2)$ with maximal robustness are
the coordinate colorings. Therefore the implication is easily verified for $n=3$; see Figure~3.
Thus let $n\geq4$ and hence $\text{girth}(H_2(n,n-1))=4$ due to Proposition~\ref{P-cycles}(c).
Thanks to Proposition~\ref{P-TransSpace} we have $|T_x(K)|=2$ for all $x\in\Z_2^n$, that is, every vertex is incident
to exactly two transition edges.
The finiteness of the graph then implies that every vertex is on a unique cycle consisting of transition edges, and
due to Lemma~\ref{L-4Coloring}(b) all these cycles have length~$4$.
\\
iii)~$\Rightarrow$~i)
Let $(0,v),(v,z),(z,w),(w,0)$ be the $4$-cycle consisting of transition edges containing the vertex~$0$.
From Lemma~\ref{L-H2n4} we obtain $v=w+z$, thus $z=v+w$.
This shows that $T_x(K)=x+\{v,w\}$ for $x\in\{0,v,w,z\}$.
Note also that $\wt(v),\,\wt(w)\geq n-1$.
Let now $x\in V(H_2(n,n-1))\,\backslash\{0,v,w,z\}$ be any other vertex. We have to show that $T_x(K)=x+\{v,w\}$.
We consider two cases.
\\
\underline{Case 1:} $x$ is adjacent to one of the vertices $\{0,v,w,z\}$. In this case it is adjacent to exactly one of these
vertices thanks to Lemma~\ref{L-4cycle}(b).
Without loss of generality we may assume $x\sim0$, which then implies $\wt(x)\geq n-1$.
Then by~\eqref{e-additivity}
\begin{equation}\label{e-shiftcycle}
  (x,x+v),\,(x+v,x+z),\,(x+z,x+w),\,(x+w,x)
\end{equation}
is a $4$-cycle in $H_2(n,n-1)$.
It can be thought of as a shift by~$x$ of the original cycle.
Again by~\eqref{e-additivity} we obtain that $x+y\sim y$ for $y\in\{0,v,w,z\}$.
Thus we have the subgraph
\begin{center}
\begin{tikzpicture}[scale=0.45]
    \draw (4,0) node (z) [label=center:\raisebox{-.7ex}{$z$}] {} ;
    \draw (0,2) node (v) [label=center:$v\;$] {} ;
    \draw (8,2) node (w) [label=center:$\;w$] {} ;
    \draw (4,4) node (0) [label=center:\raisebox{.7ex}{$0$}] {} ;
    \draw (z) to (v);
    \draw (0) to (w);
    \draw (z) to (w);
    \draw (0) to (v);

    \draw (17,0) node (xz) [label=center:\raisebox{-.7ex}{$x+z$}] {} ;
    \draw (13,2) node (xv) [label=center:$x+v\;$] {} ;
    \draw (21,2) node (xw) [label=center:$\;x+w$] {} ;
    \draw (17,4) node (x) [label=center:\raisebox{.7ex}{$x$}] {} ;
    \draw (xz) to (xv);
    \draw (x) to (xw);
    \draw (xz) to (xw);
    \draw (x) to (xv);
    \draw (0) to [out=20,in=160] (x);
    \draw (v) to [out=20,in=160] (xv);
    \draw (w) to [out=20,in=160] (xw);
    \draw (z) to [out=20,in=160] (xz);

\end{tikzpicture}
\end{center}
Now Lemma~\ref{L-4cycle}(b) tells us that
\begin{equation}\label{e-colorsC}
    K(x)=K(z),\ K(x+v)=K(w),\ K(x+z)=K(0),\ K(x+w)=K(v).
\end{equation}
Using that $x\sim0$ we see that~$x$ has neighbors of the three distinct colors $K(0),\,K(w),\,K(v)$.
Now we can identify the transition edges incident to~$x$.
The fact that $T_0(K)=\{v,w\}$ tells us that $(x,0)$ is not a transition edge.
But then~$x$ cannot be incident to any transition edge $(x,y)$ such that $K(y)=K(0)$.
Since $K(v)$ and $K(w)$ are neighboring colors of~$x$ and by assumption~$x$ is on a $4$-cycle consisting of transition edges, we conclude that
$(x,x+v)$ and $(x,x+w)$ are the transition edges incident to~$x$.
Thus $T_x(K)=x+\{v,w\}$.
By generality of~$x$ all edges in the cycle~\eqref{e-shiftcycle} are transition edges.
Note also that the coloring of the cycle~\eqref{e-shiftcycle} is uniquely determined by~\eqref{e-colorsC}.
\\
\underline{Case~2:} $x$ is not adjacent to any vertex of the original $4$-cycle.
As we have shown in Proposition~\ref{P-basics}(e) the graph $H_2(n,n-1)$ is connected and therefore there exists a path
from~$0$ to~$x$. Assume the vertices of this path are $0,y_1,\ldots,y_t=x$.
With the aid of this path, we can now translate the original cycle across the graph and obtain new $4$-cycles consisting
of transition edges and whose colors are uniquely determined by the coloring of the original $4$-cycle.
Case~1 guarantees that in each step we obtain $T_{y_i}(K)=y_i+\{v,w\}$.
Thus we conclude $T_x(K)=x+\{v,w\}$, as desired.
This part also showed that the coloring~$K$ is uniquely determined by a single $4$-cycle,  and thus the proof is complete.
\mbox{}\hfill$\Box$

\medskip
\noindent\emph{Proof of Theorem~\ref{T-MainThm2}.}
1) From Theorem~\ref{T-MainThm} we know that there exists (up to isomorphism) at most one $4$-coloring~$K$ such that
$T(K)=\ideal{f_i,f_j}$.
Now the result follows from Proposition~\ref{P-TSCoord}.
\\[1ex]
2) In what follows the parity of the Hamming weight will play a central role, and so we introduce
$\wt_2(x):=\wt(x)\!\!\!\mod2\in\Z_2$ for any $x\in\Z_2^n$ and thus compute with these weights modulo~$2$.
Recall that in~$\Z_2^n$ we have $\wt(x+y)=\wt(x)+\wt(y)-2\big|\{i\mid x_i=1=y_i\}\big|$, and therefore
\begin{equation}\label{e-wtparity}
   \wt_2(x+y)=\wt_2(x)+\wt_2(y)\text{ for all }x,\,y\in\Z_2^n.
\end{equation}
We will also compute modulo~$2$ with the indices of the set~$\cA^{\nu}_{\mu}$ introduced in the theorem.

\noindent\underline{Case 1:} Assume~$n$ is even. By Theorem~\ref{T-MainThm} and Lemma~\ref{L-4cycle}(a) there is at most one coloring~$K$ such that
$T(K)=\ideal{\mbone,f_j}$.
Thus it suffices to verify that the given~$K$ satisfies the statements.
First of all, it is clear that the sets $\cA^{\nu}_{\mu}$ are pairwise disjoint and partition the vertex set~$\Z_2^n$.
Next, since $n$ is even, we have $\wt_2(\mbone)=0$ and $\wt_2(f_i)=1$ for all $i=1,\ldots,n$.
This allows us to describe the behavior of the sets~$\cA^{\nu}_{\mu}$ along edges of the graph.
Indeed,~\eqref{e-wtparity} leads to
\begin{equation}\label{e-shiftA}
  x+\cA^{\nu}_{\mu}=
  \left\{\begin{array}{ll}\cA^{\nu}_{\mu+1},&\text{if }x=\mbone,\\[.6ex]
         \cA^{\nu+1}_{\mu},&\text{if }x=f_j,\\[.6ex]
         \cA^{\nu+1}_{\mu+1},&\text{if }x=f_i\text{ for }i\neq j\end{array}\right.
\end{equation}
for all $\nu,\,\mu\in\Z_2$.
This shows that the sets $\cA^{\nu}_{\mu}$ are independent and thus form the color classes of a proper $4$-coloring, say~$K$.
It remains to show that $T_x(K)=x+\{\mbone,f_j\}$ for all $x\in\Z_2^n$.
Due to~\eqref{e-shiftA} the vectors~$\mbone$ and~$f_j$ induce $4$-cycles in $H_2(n,n-1)$ of the form
\begin{center}
\begin{tikzpicture}[scale=0.45]
    \draw (4,0) node (a3) [label=center:\raisebox{-1ex}{$\cA^0_0$}] {} ;
    \draw (0,2) node (a2) [label=center:$\cA^1_0\;$] {} ;
    \draw (8,2) node (a4) [label=center:$\;\cA^0_1$] {} ;
    \draw (4,4) node (a1) [label=center:\raisebox{.7ex}{$\cA^1_1$}] {} ;
    \draw (3.8,0.2) node (a31) {};
    \draw (4.2,0.2) node (a32) {};
    \draw (4.2,3.8) node (a12) {};
    \draw (3.8,3.8) node (a11) {};
    \draw (0.2,2) node (a22) {};
    \draw (7.8,2) node (a42) {};
    \draw (a31) to (a22);
    \draw (a12) to (a42);
    \draw (a32) to (a42);
    \draw (a11) to (a22);
    \draw (1.6,0.6) node (b) [label=center:$f_j$] {} ;
    \draw (6,0.6) node (c) [label=center:$\mbone$] {} ;
    \draw (1.6,3.4) node (d) [label=center:$\mbone$] {} ;
    \draw (6,3.6) node (e) [label=center:$f_j$] {} ;
\end{tikzpicture}
\end{center}
Furthermore, the last case in~\eqref{e-shiftA} tells us that the only other neighbors of a vertex in~$\cA^{\nu}_{\mu}$ are in
$\cA^{\nu+1}_{\mu+1}$ and thus all have the same color.
Since this color  is different from the neighboring colors in the cycle, the edges in these $4$-cycles are indeed all the transition
edges of~$K$, that is, $T(K)=\ideal{\mbone,f_j}$.
Identity~\eqref{e-shiftA} shows that the sets $\cA^{\nu}_{\mu}$ all have the same cardinality and hence~$K$ is an even coloring.

\noindent\underline{Case 2:} Assume~$n$ is odd. We have to show that there is no proper $4$-coloring with
transition space~$\ideal{\mbone,f_j}$. Without loss of generality let $j=n$.
It will be beneficial to work again with the vertex partition $\Z_2^n=\cA^0_0\cup\cA^0_1\cup\cA^1_0\cup\cA^1_1$
(which are not color classes at this point).
As~$n$ is odd we have for all $\nu,\,\mu\in\Z_2$
\begin{equation}\label{e-shiftB}
  x+\cA^{\nu}_{\mu}=
  \left\{\begin{array}{ll}\cA^{\nu+1}_{\mu+1},&\text{if }x=\mbone,\\[.6ex]
         \cA^{\nu}_{\mu},&\text{if }x=f_n,\\[.6ex]
         \cA^{\nu}_{\mu+1},&\text{if }x=f_i\text{ for }i<n\end{array}\right.
\end{equation}
Thus $\ideal{\mbone,f_j}$ induces $4$-cycles with vertices in the sets $\cA^{\nu}_{\mu}$ as follows
\begin{equation}\label{e-two4cycles}
\begin{array}{ll}
\begin{tikzpicture}[scale=0.45]
    \draw (4,0) node (a3) [label=center:\raisebox{-1ex}{$\cA^1_1$}] {} ;
    \draw (0,2) node (a2) [label=center:$\cA^1_1\;$] {} ;
    \draw (8,2) node (a4) [label=center:$\;\cA^0_0$] {} ;
    \draw (4,4) node (a1) [label=center:\raisebox{.7ex}{$\cA^0_0$}] {} ;
    \draw (3.8,0.2) node (a31) {};
    \draw (4.2,0.2) node (a32) {};
    \draw (4.2,3.8) node (a12) {};
    \draw (3.8,3.8) node (a11) {};
    \draw (0.2,2) node (a22) {};
    \draw (7.8,2) node (a42) {};
    \draw (a31) to (a22);
    \draw (a12) to (a42);
    \draw (a32) to (a42);
    \draw (a11) to (a22);
    \draw (1.6,0.6) node (b) [label=center:$f_n$] {} ;
    \draw (6,0.6) node (c) [label=center:$\mbone$] {} ;
    \draw (1.6,3.4) node (d) [label=center:$\mbone$] {} ;
    \draw (6,3.6) node (e) [label=center:$f_n$] {} ;
\end{tikzpicture}&
\hspace*{1cm}
\begin{tikzpicture}[scale=0.45]
    \draw (4,0) node (a3) [label=center:\raisebox{-1ex}{$\cA^1_0$}] {} ;
    \draw (0,2) node (a2) [label=center:$\cA^1_0\;$] {} ;
    \draw (8,2) node (a4) [label=center:$\;\cA^0_1$] {} ;
    \draw (4,4) node (a1) [label=center:\raisebox{.7ex}{$\cA^0_1$}] {} ;
    \draw (3.8,0.2) node (a31) {};
    \draw (4.2,0.2) node (a32) {};
    \draw (4.2,3.8) node (a12) {};
    \draw (3.8,3.8) node (a11) {};
    \draw (0.2,2) node (a22) {};
    \draw (7.8,2) node (a42) {};
    \draw (a31) to (a22);
    \draw (a12) to (a42);
    \draw (a32) to (a42);
    \draw (a11) to (a22);
    \draw (1.6,0.6) node (b) [label=center:$f_n$] {} ;
    \draw (6,0.6) node (c) [label=center:$\mbone$] {} ;
    \draw (1.6,3.4) node (d) [label=center:$\mbone$] {} ;
    \draw (6,3.6) node (e) [label=center:$f_n$] {} ;
\end{tikzpicture}
\end{array}
\end{equation}
Assume now that~$K$ is a proper $4$-coloring such that $T(K)=\ideal{\mbone,f_n}$.
Then the above $4$-cycles consist of transition edges.
Lemma~\ref{L-4cycle} yields that the vertices of any such cycle attain distinct colors.
Denote the $4$ colors by $A,B,C,D$.
Using $v=0\in\cA^0_0$ as an instance of the left cycle, we have without loss of generality
\begin{equation}\label{e-inicolors}
  K(0)=A,\ K(\mbone)=B,\ K(e_n)=C,\ K(f_n)=D.
\end{equation}
Theorem~\ref{T-MainThm} tells us that this determines uniquely the coloring of each other transition $4$-cycle (and thus of every vertex),
and Lemma~\ref{L-4cycle} specifies how this is done.
In order to describe this precisely, the following sets will be helpful.
For $\nu,\,\mu\in\Z_2$ and $i=0,\ldots,n$ define
\[
   B^{\nu}_{\mu}(i):=\{v\in\cA^{\nu}_{\mu}\mid \wt(v)=i\}.
\]
Clearly, $B^{\nu}_{\mu}(i)=\emptyset$ if $i\not\equiv\nu\!\!\!\mod2$.
One easily verifies
\begin{equation}\label{e-Bcycle}
  \mbone+B^{\nu}_{\mu}(i)=B^{\nu+1}_{\mu+1}(n-i),\quad
  f_n+B^{\nu}_0(i)=B^{\nu}_0(n-1-i),\quad
  f_n+B^{\nu}_1(i)=B^{\nu}_1(n+1-i).
\end{equation}
This shows that the vertices in the $4$-cycles in \eqref{e-two4cycles} specify to
\begin{equation}\label{e-twoBcycles}
\begin{array}{ll}
\begin{tikzpicture}[scale=0.45]
    \draw (4,-.4) node (a3) [label=center:$B^1_1(i+1)$] {} ;
    \draw (-1.5,2) node (a2) [label=center:$B^1_1(n-i)$] {} ;
    \draw (10.2,2) node (a4) [label=center:$\;B^0_0(n-1-i)$] {} ;
    \draw (4,4.3) node (a1) [label=center:$B^0_0(i)$] {} ;
    \draw (3.8,0.2) node (a31) {};
    \draw (4.2,0.2) node (a32) {};
    \draw (4.2,3.8) node (a12) {};
    \draw (3.8,3.8) node (a11) {};
    \draw (0.2,2) node (a22) {};
    \draw (7.8,2) node (a42) {};
    \draw (a31) to (a22);
    \draw (a12) to (a42);
    \draw (a32) to (a42);
    \draw (a11) to (a22);
    \draw (1.4,0.8) node (b) [label=center:$f_n$] {} ;
    \draw (6.3,0.8) node (c) [label=center:$\mbone$] {} ;
    \draw (1.4,3.2) node (d) [label=center:$\mbone$] {} ;
    \draw (6.3,3.4) node (e) [label=center:$f_n$] {} ;
\end{tikzpicture}&
\begin{tikzpicture}[scale=0.45]
    \draw (4,-.4) node (a3) [label=center:$B^1_0(n-1-i)$] {} ;
    \draw (-.7,2) node (a2) [label=center:$B^1_0(i)$] {} ;
    \draw (9.5,2) node (a4) [label=center:$B^0_1(i+1)$] {} ;
    \draw (4,4.3) node (a1) [label=center:$B^0_1(n-i)$] {} ;
    \draw (3.8,0.2) node (a31) {};
    \draw (4.2,0.2) node (a32) {};
    \draw (4.2,3.8) node (a12) {};
    \draw (3.8,3.8) node (a11) {};
    \draw (0.2,2) node (a22) {};
    \draw (7.8,2) node (a42) {};
    \draw (a31) to (a22);
    \draw (a12) to (a42);
    \draw (a32) to (a42);
    \draw (a11) to (a22);
    \draw (1.4,0.8) node (b) [label=center:$f_n$] {} ;
    \draw (6.3,0.8) node (c) [label=center:$\mbone$] {} ;
    \draw (1.4,3.2) node (d) [label=center:$\mbone$] {} ;
    \draw (6.3,3.4) node (e) [label=center:$f_n$] {} ;
\end{tikzpicture}
\end{array}
\end{equation}
The cycles on the left (resp.\ right) hand side exist only if~$i$ is even (resp.\ odd).
Let $n=2t+1$.
We claim that the sets $B^{\nu}_{\mu}(i)$ attain the following colors:
\begin{equation}\label{e-colors}
\left.\begin{array}{llll}
  K(B^0_0(i))=A,& K(B^1_0(n-1-i))=A,\qquad\\[.7ex]
  K(B^1_1(n-i))=B,& K(B^0_1(i+1))=B,\\[.7ex]
  K(B^1_1(i+1))=C,& K(B^0_1(n-i))=C,\\[.7ex]
  K(B^0_0(n-1-i))=D,\qquad& K(B^1_0(i))=D,
\end{array}\right\} \text{ for all }0\leq i\leq t.
\end{equation}
If we can prove this, we arrive at a contradiction because $B^0_0(t)=B^0_0(n-1-t)$.

In order to prove~\eqref{e-colors} we induct on~$i$.
For $i=0$ we have
\[
    B^0_0(0)=\{0\},\ B^1_1(n)=\{\mbone\},\ B^1_1(1)=\{e_n\},\ B^0_0(n-1)=\{f_n\},
\]
and~\eqref{e-inicolors} establishes the left hand column of~\eqref{e-colors}.
The sets in the right hand column are all empty for $i=0$.

Let us now assume~\eqref{e-colors} for all $i<i'$.
Suppose~$i'$ is even and let $v\in B^0_0(i')$.
Let $\alpha\in\{1,\ldots,n-1\}$ be such that $v_{\alpha}=1$.
Then $w:=v+f_{\alpha}\in B^0_1(n+1-i')=B^0_1(n-(i'-1))$.
This means that~$w$ is the top vertex of a cycle as on the right hand side in~\eqref{e-twoBcycles}, whereas~$v$
is the top vertex of a cycle as on the left hand side.
These two cycles are obtained by a shift with~$f_{\alpha}$.
By assumption we know that the vertices of the right hand cycle assumes the colors $C,D,A,B$ counterclockwise with $K(w)=C$.
Now Lemma~\ref{L-4cycle} establishes the colors of the left hand cycle as in the first column of~\eqref{e-colors}.
The same reasoning applies to the case where~$j$ is odd, in which one starts with an arbitrary element in $B^1_0(i')$.
This concludes the proof.
\mbox{}\hfill$\Box$

\section{Open Problems}\label{S-OpenProb}
In this section we briefly address further questions pertaining to the coloring of the Hamming graphs.

\begin{que} First of all, it remains to answer Question~\ref{Conj-chi} for the cases not covered in Section~\ref{S-chi}.
\end{que}

\begin{que}
Can one describe the maximally robust $2^{n-d+1}$-colorings of $H_2(n,d)$?
Are their transition spaces generated?
Remark~\ref{R-H2ndRobust} is but a first step in this direction.
\end{que}

\begin{que}
From Corollary~\ref{C-ConjCases} we know that $\chi(H_q(n,n-1))=q^2$ for all $q\geq3$ and all $q^2$-colorings are even.
Can one derive further information about these minimal colorings?
\end{que}
Not surprisingly, just as in the binary case not all minimal colorings are coordinate colorings.
Here is a $9$-coloring of $H_3(4,3)$ that is not a coordinate coloring.
Consider the independent set $\cI:=\{x\in\Z_3^4\mid \wt(x)\leq1\}$
and the linear MDS code (see Remark~\ref{R-MDS} and the paragraph preceeding it)
\begin{equation}\label{e-MDS}
   \cC=\text{rowspace}\begin{pmatrix}0&1&1&1\\1&0&1&2\end{pmatrix}:=\{a(0111)+b(1012)\mid a,b\in\Z_3\}\subseteq\Z_3^4.
\end{equation}
Then each of the shifts $v+\cI,\,v\in\cC$, is clearly an independent set, and these shifts are pairwise disjoint.
Thus they form the color classes of a $9$-coloring that is not a coordinate coloring.
Note that~$\cI$ is the set of coset leaders of the code~$\cC$
(that is, each $v\in\cI$ is the unique vector of smallest weight in the coset~$v+\cC$, and these cosets partition~$\Z_3^4$).
In fact,~$\cC$ is a perfect code, see~\cite[Sec.~1.12]{HP03}.
This example thus generalizes whenever we have a linear perfect MDS code over a finite field.
The latter are exactly the Hamming codes with parameter $r=2$, hence the
$q$-ary Hamming codes of length~$n=q+1$, dimension $q-1$, and distance~$d=3$ and where~$q$ is a prime power and the alphabet
the finite field~$\F_q$ of order~$q$; see for instance \cite[Thm.~1.12.3]{HP03}.
As a consequence, we obtain a $q^{q-1}$-coloring of $H_q(q+1,3)$ that is not a coordinate coloring.
As before, the color classes are the shifts of the independent set $\{x\in\F_q^{q+1}\mid \wt(x)\leq1\}$.

Incidentally, the example above can be utilized to create an uneven 27-coloring of $H_3(5,3)$
(recall however, that it is not clear whether this graph has chromatic number~$27$).
Using~$\cC$ as in~\eqref{e-MDS} we define the sets
\begin{align*}
  \hat{\cC}&:=\{(v_1,\ldots,v_4,0)\mid (v_1,\ldots,v_4)\in\cC\}\subseteq\Z_3^5,\\
  \cJ&:=\{v\in\Z_3^5\mid \wt(v)\leq 1\},\\
  \cB_\alpha&:=\{(v_1,\ldots,v_4,\alpha)\mid \wt(v_1,\ldots,v_4)=1\} \text{ for }\alpha=1,2.
\end{align*}
Then~$\cJ$ is an independent set of cardinality~$11$ (showing that the upper bound in Theorem~\ref{T-alpha}(b) is
attained for $H_3(5,3)$), and~$\cB_1$ and~$\cB_2$ are independent sets of size~$8$.
It is straightforward to see that the 27 sets $v+\cJ,\,v+\cB_1,\,v+\cB_2$, where $v\in\hat{\cC}$, form a partition of~$\Z_3^5$
and thus serve as the color classes of an uneven $27$-coloring.

\begin{que}
Do the $q^{n-d+1}$-colorings of $H_q(n,d)$ for $q\geq3$ have transition edges?
\end{que}

For $d=1$ the graph is the complete graph
on $q^n$ vertices and thus $T(K)=E(H_q(n,1))$ for every $q^n$-coloring.
Furthermore, for $q\geq3,\,d\geq2$ it is easy to see that the $q^{n-d+1}$-coordinate colorings of $H_q(n,d)$
have no transition edges.
Indeed, consider the coloring based on the coordinates $(i_1,\ldots,i_{n-d+1})$.
Let $(x,y)\in E(H_q(n,d))$, thus $\wt(x-y)\geq d$.
Choose a coordinate~$j\in[n]\,\backslash\,\{i_1,\ldots,i_{n-d+1}\}$.
Then at least one of $x+e_j$ and $x+2e_j$ is a neighbor of~$y$ because at least one of them disagrees with~$y$ in the $j$-th coordinate and thus
has at least distance~$d$ from~$y$.
Since both these vertices agree with~$x$ in the positions $i_1,\ldots,i_{n-d+1}$, they have the same color as~$x$.
This shows that $(x,y)$ is not a transition edge.

Furthermore, the even colorings obtained from MDS Hamming codes as described above do not have transition edges either.
This can be shown by a similar argument as in the previous paragraph.
Finally, it is straightforward to verify that the uneven $27$-coloring of $H_3(5,3)$ given above has no transition edges.

Since for the binary Hamming graphs $H_2(n,n-1)$ the coordinate colorings have the maximum possible number of transition edges (that is, maximum robustness), we
conjecture that no $q^{n-d+1}$-coloring of $H_q(n,d),\,q\geq3$, has any transition edges.

We close with the following question which arises from the considerations of maximally robust colorings of $H_2(n,n-1)$.
\begin{que}
Suppose $K$ is a $q^{n-d+1}$-coloring of $H_q(n,d)$ such that its transition space $T(K)$ is generated. Does this imply that~$K$ is even?
\end{que}

\bibliographystyle{abbrv}

\begin{thebibliography}{10}

\bibitem{AhKh98}
R.~Ahlswede and L.~H. Khachatrian.
\newblock The diametric theorem in {H}amming spaces --- {O}ptimal anticodes.
\newblock {\em Adv. Appl. Math}, 20:429--449, 1998.

\bibitem{ADFS04}
N.~Alon, I.~Dinur, E.~Friedgut, and B.~Sudakov.
\newblock Graph products, {F}ourier analysis and spectral techniques.
\newblock {\em GAFA Geom. funct. anal.}, 14:913--940, 2004.

\bibitem{FrTo99}
P.~Frankl and N.~Tokushige.
\newblock The {E}rd{\H{o}}s-{K}o-{R}ado theorem for integer sequences.
\newblock {\em Combinatorica}, 19:55--63, 1999.

\bibitem{GrLo74}
D.~Greenwell and L.~Lov\'{a}sz.
\newblock Applications of product colorings.
\newblock {\em Acta Math. Acad. Sci. Hungar.}, 25:335--340, 1974.

\bibitem{HiWi08}
C.~J. Hillar and T.~Windfeldt.
\newblock Algebraic characterization of uniquely vertex colorable graphs.
\newblock {\em J. Combin. Theory Ser. B}, 98:400--414, 2008.

\bibitem{HP03}
W.~C. Huffman and V.~Pless.
\newblock {\em Fundamentals of Error-Correcting Codes}.
\newblock Cambridge University Press, Cambridge, 2003.

\bibitem{JaMa06}
R.~E. Jamison and G.~L. Matthews.
\newblock Distance $k$ colorings of {H}amming graphs.
\newblock In {\em Proceedings of the 37-th Southeastern International
  Conference on Combinatorics, Graph Theory and Computing, \text{\rm Congr.\
  Numer.\ 183}}, pages 193--202, 2006.

\bibitem{Kl96}
S.~Klav\u{z}ar.
\newblock Coloring graph products -- {A} survey.
\newblock {\em Discrete Math.}, 155:135--145, 1996.

\bibitem{Kl66}
D.~J. Kleitman.
\newblock On a combinatorial conjecture of {E}rd{\H{o}}s.
\newblock {\em J. Combin. Theory}, 1:209--214, 1966.

\bibitem{KKO15}
J.~I. Kokkala, D.~S. Krotov, and P.~R.~J. {\"O}sterg{\aa}rd.
\newblock On the classification of {MDS} codes.
\newblock {\em IEEE Trans. Inform. Theory}, IT-61:6485--6492, 2015.

\bibitem{RGSS07}
S.~Y. \mbox{El} Rouayheb, C.~N. Georghiades, E.~Soljanin, and A.~Sprintson.
\newblock Bounds on codes based on graph theory.
\newblock In {\em 2007 IEEE International Symposium on Information Theory
  (ISIT)}, pages 1876--1879, 2007.

\bibitem{Pa92}
C.~Payan.
\newblock On the chromatic number of cube-like graphs.
\newblock {\em Discr. Math.}, 103:271--277, 1992.

\bibitem{Sl89}
N.~J.~A. Sloane.
\newblock Unsolved problems in graph theory arising from the study of codes.
\newblock In {\em Graph Theory Notes of New York 18}, pages 11--20, 1989.

\end{thebibliography}

\end{document}